\numberwithin{equation}{section}
\definecolor{purple}{rgb}{0.9,0,0.8}
\definecolor{gray}{rgb}{0.7,0.7,0.7}
\newtheorem{thm}{Theorem}[section]
\newtheorem{lem}[thm]{Lemma}
\newtheorem{prop}[thm]{Proposition}
\newtheorem{dfn}[thm]{Definition}
\theoremstyle{definition}
\newtheorem{rmk}[thm]{Remark}
\newtheorem*{rmk*}{Remark}
\newcommand{\beq}{\begin{equation}}
	\newcommand{\eeq}{\end{equation}}
\newcommand{\ra}{\rightarrow}
\renewcommand{\setminus}{\backslash}
\begin{document}

\title{$K_{r,s}$ GRAPH BOOTSTRAP PERCOLATION }

\author[E. \ Bayraktar]{Erhan Bayraktar} \thanks{E. Bayraktar is partially supported by the National Science Foundation under grant DMS-2106556 and by the Susan M. Smith chair.}
\address{Department of Mathematics, University of Michigan, Ann Arbor, U.S.A}
\email{erhan@umich.edu}
\author[S. \ Chakraborty]{Suman Chakraborty}
\address{Department of Mathematics and Computer Science,  Eindhoven University of Technology, Eindhoven, Netherlands}
\email{s.chakraborty1@tue.nl}

\date{\today}
\subjclass[2010]{Primary: 82B43, 05C80. }
\keywords{Percolation, graph bootstrap percolation, random graphs, bipartite graphs}

\maketitle

\begin{abstract}
A graph $G$ percolates in the $K_{r,s}$-bootstrap process if we can add all missing edges of $G$ in some order such that each edge creates a new copy of $K_{r,s}$, where $K_{r,s}$ is the complete bipartite graph. We study $K_{r,s}$-bootstrap percolation on the Erd\H{o}s-R\'{e}nyi random graph, and determine the percolation threshold for balanced $K_{r,s}$ up to a logarithmic factor. This partially answers a question raised by Balogh, Bollob\'as, and Morris. We also establish a general lower bound of the percolation threshold for all $K_{r,s}$, with $r\geq s \geq 3$.
\end{abstract}
\section{Introduction}
For a given graph $H$, the $H$-bootstrap process is defined as follows. Let $G$ be a graph on vertex set $[n]:=\{1,2,\ldots,n\}$ and $K_n$ be the complete graph on the same set of vertices. Set $G_0 =G$ and define, for each $t\geq 0$,
$$
G_{t+1} := G_t \cup \big\{e \in E(K_n) : \exists  H \text{ with } e \in H \subset G_t \cup \{e\} \big\}.
$$

Let ${\langle G \rangle}_H = \cup_{t\geq 0} {G_t}$. Here ${\langle G \rangle}_H$ is the closure of $G$ under the $H$-bootstrap process. We say $G$ percolates under the $H$-bootstrap process on $K_n$ if ${\langle G \rangle}_H = K_n$. 
\par
Recently this process was studied by Balogh, Bollob\'as, and Morris for $G=G_{n,p}$, where $G_{n,p}$ is the random graph on $n$ vertices in which each edge is present independently with probability $p$. In \cite{balogh2012graph}, they defined the critical threshold for $H$-bootstrap percolation on $K_n$ as follows:
\[
p_c(n, H) := \inf\{p:P\left({\langle G \rangle}_H = K_n\right)\geq 1/2\}
\]
In this short article we study upper and lower bounds of $p_c(n, H)$ for $H=K_{r,s}$, where $K_{r,s}$ is the complete bipartite graph with $r$ vertices in one part and $s$ in the other. Here and throughout the paper we will assume $r\geq s \geq 3$ without loss of generality. Let 
\[
\lambda(r,s) : = \frac{rs-2}{r+s-2}.
\] The following theorem is the main result of this paper.

\begin{thm}\label{thm:sun9pmaprl}
Let $r\geq 4$, $s\geq 3$, and $s\leq r\leq (s-2)^2 +s$. Then there exist constants $c(r,s), C(r,s)>0$ such that for large enough $n$,
\begin{equation}
c(r,s) (\log n)^{-1} n^{-1/\lambda(r,s) } \leq p_c(n, K_{r,s}) \leq C(r,s) {\left(\frac{\log n}{\log\log n}\right)}^{2/\lambda(r,s)} n^{-{1/{\lambda(r,s)}}}.
\end{equation}
\end{thm}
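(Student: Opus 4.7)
Both bounds track the heuristic threshold $n^{-1/\lambda(r,s)}=n^{-(r+s-2)/(rs-2)}$: above it, a small ``seed'' survives in $G_{n,p}$ and $K_{r,s}$-bootstrap fills the rest of $K_n$; below it, a first-moment computation over rooted ``derivations'' shows no single edge can be forced into the closure.

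For the \emph{upper bound}, I would first construct deterministically a percolating seed, namely a graph $\Gamma$ on $N=\Theta(\log n/\log\log n)$ vertices with $e(\Gamma)=(1+o(1))\lambda(r,s)\cdot N$ edges such that $\langle\Gamma\rangle_{K_{r,s}}=K_N$. The natural design is iterative: start from a single copy of $K_{r,s}$ and repeatedly adjoin one vertex, joined to $r+s-2$ earlier vertices in a pattern guaranteeing that the remaining edges from the new vertex to the previously built set are generated by $K_{r,s}$-bootstrap using the small cliques produced in earlier steps. Then I would show, by a first- or second-moment argument, that $G_{n,p}$ contains such a $\Gamma$ \whp; for the claimed $p$ the expected count is $\sim n^N p^{e(\Gamma)}\to\infty$, and the $(\log n/\log\log n)^{2/\lambda(r,s)}$ slack is what is needed to beat the variance. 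Finally, once a percolated $K_N$ sits inside $G_{n,p}$, I would ``swallow'' every external vertex $v$ by noting that $v$ typically has $\gg s$ neighbors in $K_N$ and iterating a local step analogous to the seed construction. The restriction $r\le (s-2)^2+s$ should enter here to guarantee that $\lambda(r,s)$ coincides with the $2$-density of $K_{r,s}$, so that the first moment is not dominated by a denser proper subgraph.

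For the \emph{lower bound}, fix $p=c(\log n)^{-1}n^{-1/\lambda(r,s)}$, and show that \whp some $e\in K_n$ fails to enter $\langle G_{n,p}\rangle_{K_{r,s}}$. The standard approach is to bound, for a fixed $e$, the probability of its being forced into the closure via a rooted derivation in which each internal node is an edge that has $rs-1$ parents forming a $K_{r,s}$ together with it, and whose leaves are initial edges of $G_{n,p}$. By the defining property of $\lambda(r,s)$, a derivation with $k$ leaves involves at most roughly $2+(k-1)/\lambda(r,s)$ distinct vertices (this is verified with equality at a single-step derivation, where $v=r+s$ and $k=rs-1$, so $(k-1)/\lambda(r,s)=r+s-2$), so the number of such derivations is at most $n^{2+(k-1)/\lambda(r,s)}$ up to an exponential-in-$k$ combinatorial factor. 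Summing $(\text{\# derivations of size }k)\cdot p^k$ over $k$ and optimising should give $\mathbb{P}(e\in\langle G_{n,p}\rangle_{K_{r,s}})=o(n^{-2})$, after which a union bound over all $\binom{n}{2}$ edges concludes.

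The \textbf{main obstacle} in both directions is the combinatorial bookkeeping of $K_{r,s}$-derivations: because $K_{r,s}$ is bipartite and either endpoint of a prescribed edge can occupy the size-$r$ side, each step admits $2rs$ embeddings of $K_{r,s}$-minus-an-edge around that edge, and the naive counting leaks logarithmic factors that must be absorbed into the polylog slack of the statement. Matching these factors to the true depth of the extremal seed (for the upper bound) and of the extremal derivation tree (for the lower bound) is the crux; the hypothesis $r\le(s-2)^2+s$ enters precisely to prevent a denser bipartite subgraph of $K_{r,s}$ from overturning the extremal density computation.
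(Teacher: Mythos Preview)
Your upper bound sketch is essentially the content of Proposition~3 in Balogh--Bollob\'as--Morris; the paper does not redo that argument but simply cites it and then verifies that $K_{r,s}$ is \emph{balanced} in their sense (Definition~\ref{dfnbalanced930}), i.e.\ that $\frac{e(F)-1}{\nu(F)-2}\le\lambda(r,s)$ for every proper $F\subset K_{r,s}$. That verification is exactly where $r\le(s-2)^2+s$ is used, so your intuition about the role of the hypothesis on this side is correct.

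For the lower bound, the broad strategy (witness structures plus first moment) is right, but two points in your plan would fail as written.

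First, the target $\mathbb{P}(e\in\langle G_{n,p}\rangle_{K_{r,s}})=o(n^{-2})$ is both unnecessary and unachievable: since $e\in G_{n,p}$ already with probability $p\asymp n^{-1/\lambda(r,s)}(\log n)^{-1}$ and $1/\lambda(r,s)<1$, this probability is never $o(n^{-2})$. Fortunately you do not need a union bound over edges; showing $\mathbb{P}(e\in\langle G_{n,p}\rangle_{K_{r,s}})\to 0$ for a \emph{single} edge already forces $\mathbb{P}(\langle G_{n,p}\rangle_{K_{r,s}}=K_n)\to 0$, hence $p<p_c$. The paper does exactly this (Proposition~\ref{prop:thu217feb}).

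Second, and more seriously, the inequality you assert ``by the defining property of $\lambda(r,s)$'' --- that a witness with $k$ initial edges spans at most $2+(k-1)/\lambda(r,s)$ vertices --- is the entire technical content of the lower bound, and checking it at a single step does not establish it. The paper proves the equivalent statement $e(F(e))\ge\lambda(r,s)(\nu(F(e))-2)+1$ via the Red Edge Algorithm: one tracks an auxiliary graph $\mathscr{G}_t$ whose nodes are the copies $H^1,\dots,H^t$ of $K_{r,s}$ used so far, and proves inductively a strengthened inequality involving the number of components $l_t$ of $\mathscr{G}_t$ and a multiplicity count $k_t$ (Lemma~\ref{feb421pm}). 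The induction splits into three cases according to whether adding $H^t$ increases, preserves, or decreases $l_t$; in the second case one must show
\[
(P+Q-r-s)\,\frac{rs-2}{r+s-2}+rs-1-PQ\ge 0
\]
for all $1\le P\le r$, $1\le Q\le s$ with $(P,Q)\ne(r,s)$, and it is precisely the corner $P=r-1$, $Q=s$ that forces $r\le(s-2)^2+s$. This optimisation (and its multi-component analogue, Lemma~\ref{lem:sat6o3feb}) is where the work lies; your proposal does not engage with it.

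A smaller point: counting rooted derivation trees directly, as you suggest, makes the ``exponential-in-$k$ combinatorial factor'' hard to control once $k$ can be as large as $\Theta(\log n)$. The paper instead counts \emph{subgraphs} $F\subset G_{n,p}$ containing the endpoints of $e$ with $e(F)=m$ edges and $m\ge\lambda(r,s)(\nu(F)-2)+1$, and handles the range of $m$ by a two-scale argument: for $m\le\log n$ the bound on $\mathbb{E}Y_m(e)$ is summable; for $m>\log n$ one first uses a size-doubling lemma (Lemma~\ref{sat439pmfeb}) to locate some $f$ with $\log n\le e(F(f))\le rs\log n$, and only then takes a union bound over all $\binom{n}{2}$ choices of $f$.
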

\begin{rmk} This partially answers a question by Balogh, Bollob\'as, and Morris; see Problem 5 in \cite{balogh2012graph}. For the case $K_{2,t}$ some results have been recently obtained that we discuss in the next section. In the next proposition we obtain a general lower bound on $p_c(n, K_{r,s})$. One can also obtain a general lower bound using Proposition 25 in \cite{balogh2012graph}, but for $K_{r,s}$ the following  proposition provides a better lower bound.
\end{rmk}

\begin{prop}\label{prop:sun527pmapril}
For any $r,s\geq 3$,
 $$
 p_{c}(n,K_{r,s}) \geq (e \log n)^{-1}   {\lambda((s-2)^2 +s, s)}^2  n^{-1/\lambda((s-2)^2 +s, s) }. 
 $$
\end{prop}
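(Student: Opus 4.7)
The plan is to prove Proposition \ref{prop:sun527pmapril} via two ingredients: a monotonicity reduction in the parameter $r$, and a first-moment argument tuned to the critical density $\lambda = s - 1$.

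First I would establish that $p_c(n, K_{r,s})$ is non-decreasing in $r$ for fixed $s$. The argument is a direct coupling: if $G$ percolates under $K_{r_2, s}$ via an ordering $e_1, e_2, \ldots, e_M$ of its missing edges, then the same ordering percolates $G$ under $K_{r_1, s}$ whenever $r_1 \leq r_2$, since each new copy of $K_{r_2, s}$ created by an added edge $e_t$ contains a new copy of $K_{r_1, s}$ through $e_t$ (restrict the $r_2$-side to any $r_1$-subset containing the endpoint of $e_t$ in that side). Hence $p_c(n, K_{r_1, s}) \leq p_c(n, K_{r_2, s})$.

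A short computation then establishes the algebraic identity $\lambda((s-2)^2 + s, s) = s - 1$, via the factorization $(s-1)(s^2 - 2s + 2) = s^3 - 3s^2 + 4s - 2$. The target inequality thus reads $p_c(n, K_{r,s}) \geq (s-1)^2 (e\log n)^{-1} n^{-1/(s-1)}$. For $r \geq (s-2)^2 + s$, monotonicity combined with Theorem \ref{thm:sun9pmaprl} applied at $r = (s-2)^2 + s$ yields the bound, provided one carefully tracks the constants in the proof of that theorem to extract the factor $(s-1)^2/e$ in place of the unspecified $c(r,s)$.

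For $s \leq r < (s-2)^2 + s$, a direct first-moment argument is required. At the claimed critical $p$, one shows via Markov's inequality that the expected number of ``seed'' configurations in $G_{n,p}$ sufficient to initiate a $K_{r,s}$-percolation is below a positive constant, hence with positive probability $G_{n,p}$ fails to percolate. The main obstacle is choosing the seed subgraph so that its edge-rooted density equals $s - 1$ uniformly in $r$; a naive count on $K_{r, s}-e$ seeds yields only the weaker exponent $1/\lambda(r,s)$, which is insufficient. One likely needs to identify an auxiliary dense bipartite structure (arising from multiple steps of the bootstrap process, or from an iterated rooted configuration) that must appear in any percolating $G_{n,p}$ regardless of $r$, and whose two-density is exactly $s-1$.
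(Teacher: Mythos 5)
Your monotonicity reduction is exactly the paper's first step: $\langle G\rangle_{K_{r,s}}=K_n$ implies $\langle G\rangle_{K_{r',s'}}=K_n$ for $r'\le r$, $s'\le s$ (the paper allows dropping both parameters, you only drop $r$, but keeping $s'=s$ is all that is needed here), hence $p_c(n,K_{r,s})\ge p_c(n,K_{r',s'})$. The algebraic identity $\lambda((s-2)^2+s,s)=s-1$ is correct and is a nice simplification the paper does not spell out. However, the second ingredient should be Proposition~\ref{prop:thu217feb}, not Theorem~\ref{thm:sun9pmaprl}: Proposition~\ref{prop:thu217feb} states precisely that if $epn^{1/\lambda(r',s')}(\log n)r's'\le\lambda(r',s')^2$ then $\mathbb{P}\bigl(e\in\langle G_{n,p}\rangle_{K_{r',s'}}\bigr)\to 0$, which gives the explicit constant directly. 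Chasing $c(r,s)$ through the proof of Theorem~\ref{thm:sun9pmaprl} just re-derives the same proposition and adds no leverage.

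The real gap is your fourth paragraph. The case $s\le r<(s-2)^2+s$ cannot be handled by a better first-moment argument, because in that regime the stated bound is actually incompatible with the upper bound in Theorem~\ref{thm:sun9pmaprl}: for $r<(s-2)^2+s$ one has $\lambda(r,s)<s-1$, so $n^{-1/(s-1)}\gg n^{-1/\lambda(r,s)}\ge (\text{polylog})^{-1}\,p_c(n,K_{r,s})$, i.e.\ the claimed lower bound would exceed the true threshold. The paper's own argument only yields the exponent $1/\lambda\bigl(\min(r,(s-2)^2+s),s\bigr)$ after the supremum, and the final supremum is identified with $\lambda((s-2)^2+s,s)$ only under the implicit assumption $r\ge (s-2)^2+s$ --- the section is explicitly titled ``the unbalanced case.'' So rather than hunt for an ``auxiliary dense bipartite structure,'' you should recognize that Proposition~\ref{prop:sun527pmapril} is a complementary bound for the unbalanced regime $r\ge (s-2)^2+s$, where your monotonicity reduction combined with Proposition~\ref{prop:thu217feb} at $r'=(s-2)^2+s$, $s'=s$ already closes the proof.
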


\subsection{Related results} 
Graph bootstrap percolation is an example of cellular automata introduced by von Neumann \cite{von1966cell} (see also \cite{burks1969neumann}). Bollob\'as \cite{bollobas1968weakly} introduced $H$-bootstrap percolation, which is also known as weak saturation. Extremal questions are well studied when $H=K_r$ (see \cite{alon1985extremal}, \cite{frankl1982extremal}, and \cite{kalai1984weakly}). \par

More recently, graph bootstrap percolation has been studied on random graphs (see \cite{bollobas_2001} for an exposition on random graphs). In the context of the Erd\H{o}s-R\'{e}nyi random graph Balogh, Bollob\'as, and Morris \cite{balogh2012graph} obtained the following result regarding $K_r$ bootstrap percolation. It was shown that for $r\geq 4$, and $n \in \mathbb{N}$, sufficiently large
\[
\frac{n^{-1/\lambda(r)}}{2 e \log {n}} \leq p_c(n, K_r) \leq n^{-1/\lambda(r)} \log {n},
\]
where $\lambda(r) = \frac{{r \choose 2} -2}{r-2}$. Recently, extremal results have been studied for $H=K_{r,s}$, where $K_{r,s}$ is the complete bipartite graph with one part containing $r$ nodes, and $s$ nodes in the other. In \cite{gan2015ks}, the authors considered a related process called saturation. A graph $G$ is called called $H$ saturated if $G$ does not contain a copy of $H$, and adding any missing edge in $G$ completes a new copy of $H$. In \cite{gan2015ks}, it was shown that if $K_{n,n}$ is $K_{r,s}$ saturated then it must have at least $(r+s -2)n-(r+s -2)^2$ edges, confirming a conjecture in \cite{moshkovitz2015exact} up to an additive constant. In \cite{moshkovitz2015exact} the authors studied the weak saturation of $K_{r,s}$ in $K_{n,n}$, and showed that if it is $K_{r,s}$-weakly saturated in a bipartite graph, then it has at least $(2s - 2 + o(1))n$ edges, when $s\leq r$. Weak saturation of $K_{r,s}$ in $K_{n}$ has been studied in \cite{kronenberg2021weak}. In the context of random graph the authors in \cite{balogh2012graph} proposed the problem (Problem 5 in \cite{balogh2012graph} ) to determine $p_c(n, K_{r,s})$, at least up to a poly-logarithmic factor, for all $r,s \in \mathbb{N}$. It was shown in \cite{balogh2012graph} that 
\[
p_c(n, K_{2,3}) = \frac{\log{n}}{n} + \Theta\left(\frac{1}{n}\right).
\]
Recently some progress has been made for bipartite graphs of the form $K_{2,t}$. In \cite{bidgoli2018k_}, it was shown that 

\[
p_c(n, K_{2,4}) = \Theta\left(\frac{1}{n^{10/13}}\right).
\]
A lower and upper bound for $K_{2,t}$ is also obtained in \cite{bidgoli2018k_} for $t\geq 4$. Our result complements the results in \cite{bidgoli2018k_}, and determines $p_{c}(n,K_{r,s})$ up to poly-logarithmic factor when the graph is balanced (see Definition \ref{dfnbalanced930}). We also obtain a general lower bound for  $p_{c}(n,K_{r,s})$ when $r,s\geq 3$. 

\subsection{Remarks on the proof} Our proof of the lower bound in Theorem \ref{thm:sun9pmaprl}  is based on the witness set algorithm introduced in \cite{balogh2012graph}. The main idea involves two steps. The first step is to show that if a graph $G$ percolates under the $K_{r,s}$-bootstrap process on $K_n$ then there exists an witness set (see Section \ref{thusep724pm} for the precise definition) satisfying certain extremal properties. The second step is to show that if $p$ is below a certain threshold then there is no such set with high probability, that is, with probability going to one as the size of the graph goes to infinity. Although we use the same algorithm to establish the extremal properties of the witness set, the steps involved are different from those used in \cite{balogh2012graph} to prove bounds on the $K_r$-bootstrap process, and the analysis of the algorithm leads to a different optimization problem than in the case of $K_r$. Interestingly the condition required to establish the lower bound for $K_{r,s}$ using the witness set algorithm is also necessary to show that $K_{r,s}$ is balanced. Our lower bound works for $r,s\geq 3$. The upper bound directly uses Proposition 3 from \cite{balogh2012graph}. The assumptions in their proposition are valid when $r\geq4$ and $s\geq 3$. 

\section{Lower bound for $K_{r,s}$ percolation}\label{thusep724pm}
A novel Witness-Set algorithm was introduced in \cite{balogh2012graph} in the context of $K_r$-bootstrap percolation. We first fix some notations and then recall the algorithm for $H$-bootstrap percolation, for any finite graph $H$. We start with a graph $G$, and run the $H$-bootstrap process, that is we add the edges in ${\langle G \rangle}_H\setminus G$ one by one. The edges in ${\langle G \rangle}_H\setminus G$ are called infected edges. First, let us fix an ordering of the edges $(e_1,e_2,\ldots,e_k)$ in ${\langle G \rangle}_H\setminus G$. More precisely, $e_1$ is the first edge that was added (we also say `$e_1$ is infected') and $H^1$ is the copy of $H$ that was completed by adding $e_1$ (if more than one is completed we arbitrarily choose one), and continuing similarly for $i=2,3,\ldots,k$, let $e_i$ be the $i$-th edge that was added (or infected), and $H^i$ be the copy of $H$ that was completed by adding $e_i$ (if more than one is completed we arbitrarily choose one). We are now ready to state the  Witness-Set algorithm.
\subsection*{Witness-Set Algorithm} Assign a graph $F(e) \subseteq G$, to each edge $e \in {\langle G \rangle}_H$. The set of edges of $F(e)$, denoted by $E(F(e))$ is obtained as follows:  
\begin{itemize}
\item If $e \in G$ then set $E(F(e)) = \{e\}$. 

\item If $e=e_i$ for some $i=1,2,\ldots,k$ then
\beq\label{fri627feb}
E(F(e)) := \bigcup_{e' \neq e \in E(H^i)} E(F(e')).
\eeq
\end{itemize}
Now $F(e)$ is the graph whose vertices are the endpoints of the edges in $E(F(e))$, and edge set $E(F(e))$. The graph $F(e)$ is called the Witness-Set of edge $e$. Note that in \eqref{fri627feb} the union is taken only over the edges of $H$. In particular in the bootstrap process when a copy of $H$ is completed on the set of vertices, say, $V(H)$, there might be additional edges in the graph induced by $V(H)$, and the union is not taken over such edges.
\subsection*{The Red Edge Algorithm} Let $G$ be a graph, and $e \in {\langle G \rangle}_H\setminus G$.
\begin{itemize}
\item Run the Witness-Set Algorithm until the edge $e$ is infected.
\item Let $(e_{a_1}, e_{a_2}, \ldots, e_{a_m})$ be the infected edges which satisfy $F(e_{a_j}) \subseteq F(e)$, where $e_{a_m} = e$ and $a_1<a_2<\ldots<a_m$.
\item Call the set of edges $S_{e}:=\{e_{a_1},e_{a_2},\ldots,e_{a_m}\}$ red edges, and note that $e_{a_j} \in H^{a_j} \setminus (H^{a_1} \cup \ldots \cup H^{a_{j-1}})$. 
\end{itemize}

Therefore $F(e) = \left(H^{a_1} \cup \ldots \cup H^{a_m}\right)\setminus \{e_{a_1},e_{a_2},\ldots,e_{a_m}\}$. 

For an edge $ e \in {\langle G \rangle}_H \setminus G$ run the Red-Edge Algorithm, and let $S_{e} = \{e_{a_1},e_{a_2},\ldots,e_{a_m}\}$. Then for $t \in [m]$, define 
\[
B_t(e):= (H^{a_1} \cup \ldots \cup H^{a_t}) \setminus \{e_{a_1},\ldots, e_{a_t}\}.
\]
 Also, define a graph $\mathscr{G}_t(e)$, obtained using the Red Edge Algorithm whose vertices are the graphs $\{H^{a_1}, H^{a_2}, \ldots, H^{a_t}\}$, and in which two nodes $H^{a_i}$, and $H^{a_j}$ are adjacent if they share at least one common edge. \par
 Let us now make few remarks about the Red Edge algorithm. First note that, if $e=e_i$ for some $i=1,2,\ldots,k$, then $F(e)$ can be interpreted as the subset of $G$ that causes the infection of $e$. For each $j=1,2,\ldots,m$, the condition $F(e_{a_j})\subseteq F(e)$  implies that $H^{a_j}\setminus \{e_{a_1},e_{a_2},\ldots,e_{a_m}\}\subseteq F(f)$ for some $f \in H^{a_m}$. In words, at the $j$-th step in the Red Edge algorithm $e_{a_j}$ is added and $H^{a_j}$ is completed. The condition $F(e_{a_j})\subseteq F(e)$ ensures the graph $\mathscr{G}_m$ is connected (this will be useful in our proof; see Lemma \ref{feb243pm}, Lemma \ref{feb421pm} and Lemma \ref{feb242pmsun} below for more details). In Lemma \ref{feb421pm} below we obtain an upper bound on the number of edges in $B_t$, which will be used to obtain the lower bound in Theorem \ref{thm:sun9pmaprl}.    \par
 We now provide an explicit example to illustrate the Witness-set algorithm and the Red Edge algorithm in Figure 1. In this figure, the nodes of $G$ are given by the set $\{A,B,C,D,E,F,G,H\}$, and the edges of $G$ are drawn in black. The edges of ${\langle G \rangle}_H\setminus G$ are drawn in red, where $H$ is a triangle (complete graph on three vertices). We first ordered the edges in ${\langle G \rangle}_H\setminus G$ and marked them by $e_1,e_2,\ldots,e_7$, and the triangle completed by adding them are $ABC,ACD,BCD,FGH,ADE,BAE$, and $CDE$ respectively. Now let us run the Witness-set algorithm until the edge $e=e_5$ gets infected, we get
\begin{align*}
	&F(e_1)=\{AB,BC\}, \,\,	F(e_2)=\{AB,BC,CD\}, \,\, F(e_3)=\{BC,CD\}, \,\,  F(e_4)=\{GF,GH\},\\& F(e_5)=\{DE,AB,BC,CD\}.
\end{align*}
Now in the Red Edge algorithm, for $e=e_5$ we have $S_{e} = \{e_1,e_2,e_3,e_5\}$ and 
\begin{align*}
B_1(e_5) = \{AB,BC\}, B_2(e_5)= \{AB,BC,CD\}, B_3(e_5)=\{AB,BC,CD\}, B_4(e_5)=\{AB,BC,CD,DE\}.
\end{align*}
\begin{figure}\label{fig:451pm01apr21}
	\includegraphics[width=14cm]{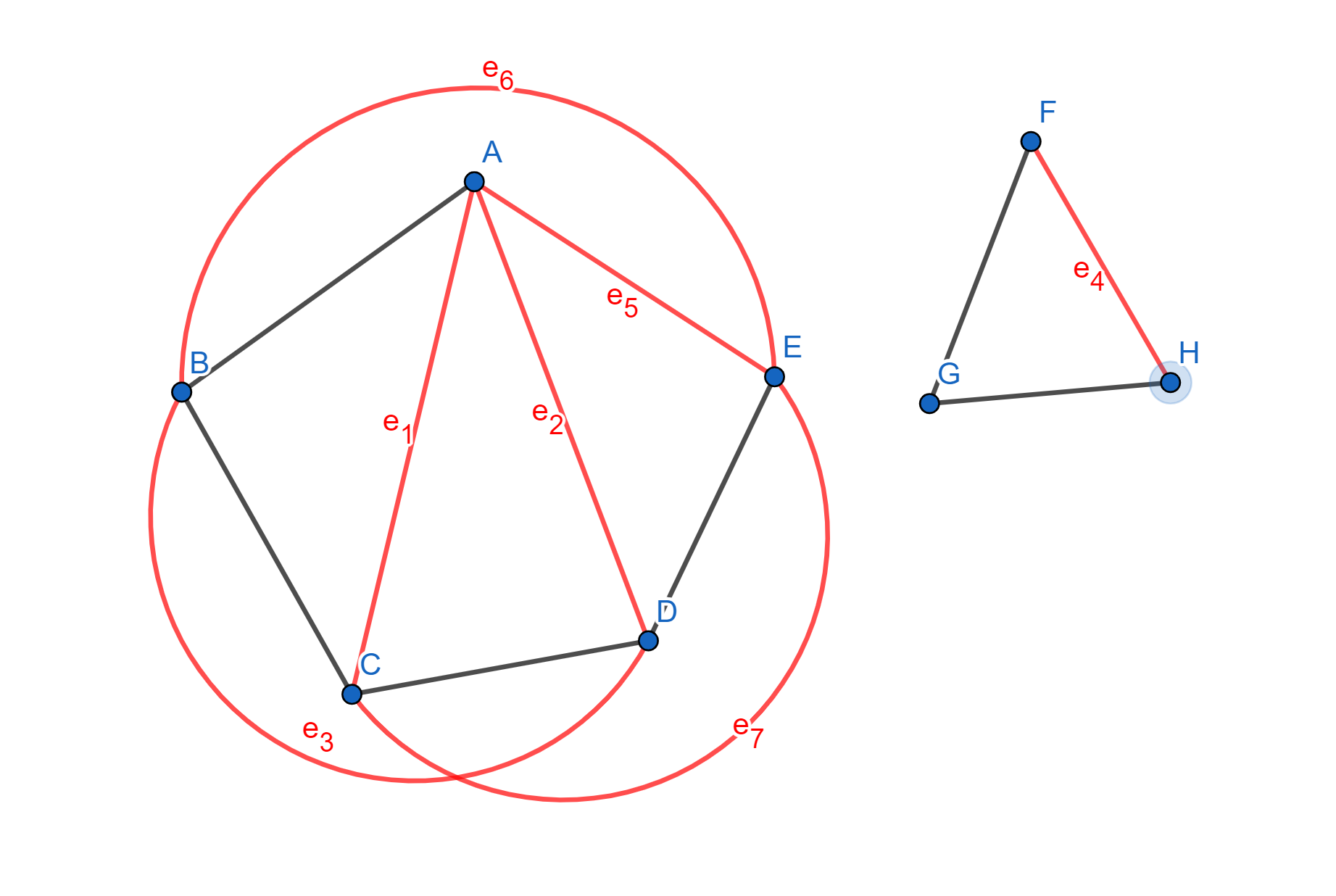}
	\caption{Example of the running of the Witness-set algorithm and the Red Edge algorithm}
	\centering
\end{figure}
Here note, for example, that $B_3(e_5) \neq F(e_3)$, and $B_1(e_3)= F(e_3)= \{BC,CD\}$. Also, quantities such as $B_3(e_3)$ does not make sense. \par
Let us now start with two basic results. For an edge $ e \in {\langle G \rangle}_H \setminus G$ run the Red-Edge Algorithm, and let $S_{e} = \{e_{a_1},e_{a_2},\ldots,e_{a_m}\}$. Then consider the graph $\mathscr{G}_m(e)$,  whose vertices are the graphs $\{H^{a_1}, H^{a_2}, \ldots, H^{a_m}\}$, and in which two nodes $H^{a_i}$, and $H^{a_j}$ are adjacent if $E(H^{a_i})\cap E(H^{a_j})$ is non-empty. With this notation, the first one (Lemma \ref{feb243pm}) states that the graph $\mathscr{G}_m(e)$ is connected, and the second one (Lemma \ref{sat439pmfeb}) ensures that all witness sets can not be very large. 
\begin{lem}\label{feb243pm}
Let $F(e)$ be a Witness-Set for $e$ on the graph $G$. Then $\mathscr{G}_m(e)$ is a connected graph.
\end{lem}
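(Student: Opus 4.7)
My plan is to argue by contradiction. Suppose $\mathscr{G}_m$ is not connected; let $C$ be the connected component of $H^m$ and set $C' := \{H^1,\dots,H^m\}\setminus C$, which is nonempty by assumption. The absence of any $\mathscr{G}_m$-edge between $C$ and $C'$ means that no $K_n$-edge lies simultaneously in some $H^i\in C$ and some $H^j\in C'$; in particular every red edge $e_j$ with $H^j\in C$ (respectively $C'$) appears only in copies from $C$ (respectively $C'$). Combined with the identity $F(e) = (H^1\cup\cdots\cup H^m)\setminus\{e_1,\dots,e_m\}$ recorded just before the lemma, this yields a disjoint decomposition of edge sets $E(F(e)) = F_C \sqcup F_{C'}$, where $F_C$ collects the non-red edges belonging to some $H^i\in C$ and $F_{C'}$ is defined symmetrically.

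The heart of the proof is to show $E(F(e))\subseteq F_C$ by unfolding the recursion \eqref{fri627feb} in infection order. I would process the indices $k$ with $H^k\in C$ in increasing order of infection time. For the minimal such $k$, any edge of $E(H^k)\setminus\{e_k\}$ must lie in $E(G)$: otherwise it would equal some $e_j$ with $j<k$, and the disconnection between $C$ and $C'$ would force $H^j\in C$, contradicting minimality. Each such $G$-edge lies in $F_C$, so $\mathbf{WE}(e_k)\subseteq F_C$. In the inductive step, any $G$-edge of $E(H^k)\setminus\{e_k\}$ still lies in $F_C$ by construction, while any infected $e_j\in E(H^k)\setminus\{e_k\}$ forces $H^j\in C$ (hence $j<k$) and therefore $\mathbf{WE}(e_j)\subseteq F_C$ by the induction hypothesis. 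Setting $k=m$ yields $\mathbf{WE}(e)\subseteq F_C$.

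To reach a contradiction I would then exhibit an element of $F_{C'}$. Pick $i\in C'$ minimal under infection order; the same minimality argument shows every edge of $E(H^i)\setminus\{e_i\}$ lies in $E(G)$, and since $|E(K_{r,s})|=rs\geq 2$ this set is nonempty. Any such edge lies in $F_{C'}$, but $\mathbf{WE}(e)\subseteq F_C$ forces it to miss $F_{C'}$, contradicting $E(F(e)) = F_C\sqcup F_{C'}$. The step I expect to be the main obstacle is the inductive claim $\mathbf{WE}(e_k)\subseteq F_C$: it depends crucially on the closure property that every infected edge appearing inside an $H^k$ on the list is itself on the list $\{e_1,\dots,e_m\}$, which in turn relies on the nesting $\mathbf{WE}(\cdot)\subseteq\mathbf{WE}(e_k)\subseteq\mathbf{WE}(e)$ built into the Witness-Set Algorithm together with the fact that the ordering $(e_1,\dots,e_m)$ respects infection time.
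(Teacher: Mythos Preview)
Your argument is correct. The key step---that every infected edge appearing in some $H^k$ (with $k\le m$) is itself one of the $e_j$ with $j<k$---follows exactly as you indicate: $\mathbf{WE}(e')\subset\mathbf{WE}(e_k)\subset\mathbf{WE}(e)$ by \eqref{fri627feb}, and $e'\notin G$ forces $e'\notin F(e)$ because $\mathbf{WE}(\cdot)\subset E(G)$ always. With that in hand your induction goes through cleanly, and the nonemptiness of $F_{C'}$ gives the contradiction.

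The paper's proof rests on the same recursive structure but is packaged as a direct connectivity argument rather than a contradiction: for each $j\in[m]$ it picks an edge $f\in F(e_j)\subset F(e)$ and asserts (appealing to the definition of the Witness-Set Algorithm) that from $H^m$ one can reach some $H^t$ containing $f$, and from $H^j$ one can reach some $H^{t'}$ containing $f$; since $H^t$ and $H^{t'}$ share $f$ they are equal or adjacent, closing the path. Your induction is in effect a fully written-out version of the step the paper compresses into the phrase ``by the definition of Witness-Set algorithm, there is a path,'' so your route is more explicit on precisely the point the paper leaves to the reader. The trade-off is that the paper's formulation makes the conclusion (a path from $H^m$ to each $H^j$) visible without the detour through a hypothetical disconnection.
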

\begin{proof}[Proof of Lemma \ref{feb243pm}] Take $f \in F(e)$. Then we claim that there is a path in $\mathscr{G}_m(e)$ from $H^{a_m}$ to $H^{a_t}$ for some $t\in [m]$, where $f\in E(H^{a_t})$. Indeed, since $f \in F(e)$ either $f \in E(H^{a_m})$, in which case we are done or there is an $f_1\in E(H^{a_m})$ such that $f$ belongs to the Witness-Set of $f_1$.  Let $H^{(1)}$ be the copy of $H$ that was created by the Red-Edge algorithm by the addition of $f_1$ (note that $H^{(1)}$ and $H^{a_m}$ must be two different copies of $H$). Clearly, $H^{a_m}$ and $H^{(1)}$ are adjacent in $\mathscr{G}_m(e)$. Then again either  $f \in E(H^{(1)})$, in which case we are done or there is an $f_2\in E(H^{(1)})$ such that $f$ belongs to the Witness-Set of $f_2$.  Let $H^{(2)}$ be the copy of $H$ that was created by the Red-Edge algorithm by the addition of $f_2$ (note that $H^{(2)}, H^{(1)}$, and $H^{(a_m)}$ must be three different copies of $H$). Again, $H^{(2)}$ and $H^{(1)}$ are adjacent in $\mathscr{G}_m(e)$. Continuing this similarly the claim is proved since there are only $m$ distinct copies of $H$ that were created by the Red-Edge algorithm. \par 
Now for $j \in [m]$, $F(e_{a_j}) \subseteq F(e_{a_m})$. Since the set $F(e_{a_j})$ is non-empty, there exists an edge $f \in F(e_{a_j}) \cap F(e)$. Thus there is a $t \in [m]$, such that there is a path in $\mathscr{G}_m$ from  $H^{a_m}$ to $H^{a_t}$ and $f \in E(H^{a_t})$.  Also since $f \in F(e_{a_j})$, there is a path from $H^{a_j}$ to $H^{a_{t'}}$ such that $f \in E(H^{a_{t'}})$. These give $f \in E(H^{a_t})\cap E(H^{a_{t'}})$. Therefore either $t = t'$ or $H^{a_t}$ and $H^{a_{t'}}$ are neighbors. Thus there is a path from $H^{a_m}$ to $H^{a_j}$.
\end{proof}
\begin{rmk}
	It is not difficult to see that $\mathscr{G}_t(e)$ is not necessarily a connected graph for all $t \in [m]$. Nevertheless, we will only use the fact that $\mathscr{G}_m(e)$ is connected to deduce Lemma \ref{feb242pmsun} from Lemma \ref{feb421pm}.
\end{rmk}

\begin{lem}\label{sat439pmfeb}
Let $F(e)$ be an Witness-Set for $e$ on the graph $G$. Let $L \in \mathbb{N}$. If $e(F(e)) \geq L$, then there exists an edge $f \in E\left({\langle G \rangle}_H\right)$ with
\beq\label{451pmfebsat}
L \leq e(F(f)) \leq e(H) L
\eeq
in the same realization of the Witness-Set algorithm. 
\end{lem}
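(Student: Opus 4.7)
The plan is to construct a finite descending sequence of edges $e = f_0, f_1, f_2, \ldots$ in $\langle G \rangle_H$ and stop the first time the witness set shrinks to at most $e(H) L$ edges. At each stage, if $e(F(f_i)) \leq e(H) L$ I take $f := f_i$; otherwise I let $H^{f_i}$ denote the copy of $H$ completed when $f_i$ was infected and invoke the defining identity $\mathbf{WE}(f_i) = \bigcup_{e' \in E(H^{f_i}) \setminus \{f_i\}} \mathbf{WE}(e')$ from \eqref{fri627feb}. Pigeonhole on the $e(H)-1$ terms of this union then produces some $e' \in E(H^{f_i}) \setminus \{f_i\}$ with $e(F(e')) \geq e(F(f_i))/(e(H)-1)$, which I designate as $f_{i+1}$.

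I would then track two invariants. As long as the stopping rule has not yet triggered we have $e(F(f_i)) > e(H) L$, so the pigeonhole estimate gives $e(F(f_{i+1})) > e(H) L / (e(H)-1) \geq L$ (using $e(H) \geq 2$); together with the hypothesis $e(F(f_0)) \geq L$, this yields $e(F(f_i)) \geq L$ at every step, so whenever the stopping rule fires it automatically delivers the two-sided bound \eqref{451pmfebsat} for the selected edge. For termination I observe that every element of $E(H^{f_i}) \setminus \{f_i\}$ was infected strictly before $f_i$ in the fixed infection order used by the Witness-Set algorithm; thus the sequence of infection times is strictly decreasing and the descent must end in finitely many steps.

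The only way the descent could halt without triggering the stopping rule is to arrive at an original edge $f_i \in G$, where $e(F(f_i)) = 1$. This contradicts the lower invariant $e(F(f_i)) \geq L$ whenever $L \geq 2$, so the descent cannot stall there. The residual case $L = 1$ is essentially trivial: any original edge $f$ satisfies $1 \leq e(F(f)) = 1 \leq e(H)$, which is \eqref{451pmfebsat} at $L = 1$, and such an edge is available because $e(F(e)) \geq 1$ forces the Witness-Set construction rooted at $e$ to involve at least one original edge.

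The main (mild) obstacle I expect is ensuring that each descent step is a strict decrease in infection time rather than merely in witness-set size, so that well-foundedness kicks in; this is exactly what the observation "every edge of $H^{f_i} \setminus \{f_i\}$ was infected strictly earlier than $f_i$" supplies, and it is the only place where the specific structure of the Witness-Set algorithm (as opposed to a crude volume estimate on $F(\cdot)$) is used. Everything else is pigeonhole combined with the lower invariant.
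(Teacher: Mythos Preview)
Your proof is correct. The paper argues forward rather than backward: as the Witness-Set algorithm runs, each newly infected edge has witness-set size at most $e(H)$ times the maximum witness-set size among all previously infected edges, so this running maximum starts at $1$, eventually reaches $e(F(e))\ge L$, and can grow by at most a factor of $e(H)$ at each step; hence at the first step where it reaches $\ge L$ the corresponding edge $f$ has $e(F(f))\in[L,e(H)L]$. Your pigeonhole descent from $e$ is a little more work (you need the strict-infection-time termination argument and the $L=1$ boundary case), but it delivers the bonus $F(f)\subset F(e)$ transparently, since each $f_{i+1}$ satisfies $\mathbf{WE}(f_{i+1})\subset\mathbf{WE}(f_i)$; the paper asserts this containment too, though extracting it cleanly from the forward-max argument as written requires a small additional remark.
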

\begin{proof}[Proof of Lemma \ref{sat439pmfeb}]
Firstly, if $e(F(e)) \leq e(H)L$ then we can take $f=e$, and we are done. Otherwise, consider an instance of the Witness-Set algorithm when $e_1,e_2,\ldots, e_l$ are already infected, and after that $f$ is next in line to be infected. Then by \eqref{fri627feb} 
 $$
 e(F(f)) \leq e(H) \max_{1\leq i \leq l} e(F(e_i)).
 $$
 In other words $ e(F(f)) \leq e(H) e(F(e_i))$ for $1\leq i \leq l$. Therefore if $e(F(e)) > e(H)L$ one witness set satisfying \eqref{451pmfebsat} must be created in the process with $F(f) \subset F(e)$. 
\end{proof}

The following lemma provides us the key estimate to establish the lower bound. Let us fix some notations before stating the lemma. Let $l_t$ denote the number of components of $\mathscr{G}_t(e)$. Also let $c_t(v)$ denote the number of components of $\mathscr{G}_t(e)$ containing the vertex $v \in V(G)$, and define
$$
k_t := \sum_{v \in V(B_t)} {\left(c_t(v)-1 \right)}.
$$
We are now ready to state the lemma and the proof is deferred to the end of this section.
\begin{lem}\label{feb421pm}
For $r \geq 3$, $s\geq 3$, and $r\leq (s-2)^2 +s$ we have $$e(B_t(e)) \geq \frac{rs-2}{r+s-2} \left(v(B_{t}(e)) + k_{t} - l_{t}(r+s))\right) + l_t (rs-1).$$
\end{lem}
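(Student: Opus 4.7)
My strategy is to prove the inequality component-by-component. Since any two copies of $K_{r,s}$ lying in distinct components of $\mathscr{G}_t$ share no edges (else they would be adjacent), the edge sets split cleanly across components and $e(B_t)=\sum_C e(C)$, where $e(C):=\bigl|\bigcup_{H^j\in C}E(H^j)\setminus\{e_j:H^j\in C\}\bigr|$. The definition of $k_t$ gives $\sum_C |V(C)|=\nu(B_t)+k_t$. So it suffices to prove, for every component $C$ of $\mathscr{G}_t$,
$$
e(C)\ \geq\ \lambda\bigl(|V(C)|-(r+s)\bigr)+(rs-1);
$$
the lemma then follows by summing. The base case $|C|=1$ is immediate since $e(C)=rs-1$ and $|V(C)|=r+s$.

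\textbf{Induction on $m=|C|$.} For $m\geq 2$, let $H^{*}$ be the last-infected copy in $C$, with red edge $e^{*}$, and let $C_1,\dots,C_q$ ($q\geq 1$) be the connected components of $C\setminus\{H^{*}\}$ inside $\mathscr{G}_t$, each with strictly fewer than $m$ copies. Because $H^{*}$ is last in $C$, $e^{*}\notin\bigcup_i A(C_i)$ where $A(C_i):=\bigcup_{H^j\in C_i}E(H^j)$, and the $A(C_i)$ are pairwise edge-disjoint by the same argument. A short bookkeeping calculation yields
$$
e(C)=\sum_i e(C_i)+rs-a^{*}-1,\qquad |V(C)|=\sum_i|V(C_i)|-O+(r+s)-w^{*},
$$
with $a^{*}:=|E(H^{*})\cap\bigcup_i A(C_i)|=\sum_i a^{*}_i$, $w^{*}:=|V(H^{*})\cap\bigcup_i V(C_i)|$, and $O:=\sum_i|V(C_i)|-|\bigcup_i V(C_i)|\geq 0$. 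Substituting the inductive hypothesis for each $C_i$, the target inequality for $C$ reduces to
$$
a^{*}\ \leq\ q\bigl[(rs-1)-\lambda(r+s)\bigr]+\lambda(w^{*}+O). \qquad(\star)
$$

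\textbf{Per-piece bound.} For each $i$, write $p_i,q_i\geq 1$ for the numbers of left/right vertices of $H^{*}$ lying in $V(C_i)$ (under $H^{*}$'s bipartition), so that $a^{*}_i\leq p_iq_i$; additionally $a^{*}_i\leq rs-1$ because $e^{*}\notin A(C_i)$, handling the corner case $(p_i,q_i)=(r,s)$. I will prove
$$
a^{*}_i-\lambda(p_i+q_i)\ \leq\ (rs-1)-\lambda(r+s). \qquad(\star\star)
$$
Summing $(\star\star)$ over $i$ and noting that $\sum_i(p_i+q_i)=w^{*}+O'$ with $O'\leq O$ (the overcount restricted to $V(H^{*})$ is termwise at most the total overcount among the $V(C_i)$) then gives $(\star)$.

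\textbf{Polynomial inequality.} Setting $a=r-p_i$, $b=s-q_i$, inequality $(\star\star)$ becomes $g(a,b)\geq 0$ on $\{0,\dots,r-1\}\times\{0,\dots,s-1\}\setminus\{(0,0)\}$, where
$$
g(a,b):=a(s^2-2s+2)+b(r^2-2r+2)-ab(r+s-2)-(r+s-2).
$$
The hypothesis $r\leq (s-2)^2+s$ is precisely $g(1,0)=s^2-3s+4-r\geq 0$, which is the binding constraint. The remaining key values are $g(0,1)=r^2-3r+4-s\geq(r-2)^2\geq 0$ (from $s\leq r$), $g(1,1)=(r-2)^2+(s-2)^2\geq 0$, $g(r-1,s-1)=0$ (by direct expansion), $g(r-1,1)=(r-1)g(1,0)+g(0,1)\geq 0$, and $g(1,s-1)=g(1,0)+(s-1)g(0,1)\geq 0$ by algebraic identities. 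Since $g(a,0)$ and $g(0,b)$ are linear with positive slope in $a$ and $b$ respectively, and $g$ is bilinear on $[1,r-1]\times[1,s-1]$ (so its minimum is attained at one of the four listed corners), the inequality $g\geq 0$ extends to the whole region. The main obstacle is exactly this polynomial verification, and in particular recognizing $(a,b)=(1,0)$---corresponding to $H^{*}$ sharing $r-1$ left and all $s$ right vertices with a single $C_i$---as the case that pins down the hypothesis $r\leq (s-2)^2+s$; the remaining steps are essentially bookkeeping.
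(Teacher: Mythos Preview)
Your proof is correct and reaches the same destination as the paper, but the route is organized differently. The paper does induction on $t$ and splits into three cases according to whether $l_t$ increases, stays fixed, or decreases; Case~III (several components merged) is handled by a separate optimization lemma (Lemma~\ref{lem:sat6o3feb}) with the constraints $\sum P_i\le r$, $\sum Q_i\le s$. You instead reduce to a per-component statement, do induction on the number of copies in a connected cluster, peel off the last-infected copy, and bound the contribution of each residual piece $C_i$ by a single inequality $(\star\star)$ that you verify bilinearly. The core polynomial inequality is the same one---your $g(a,b)\ge 0$ is exactly the paper's display~\eqref{eqn:fri203feb} after the substitution $P=r-a$, $Q=s-b$---and both proofs identify the corner $(P,Q)=(r-1,s)$ (your $(a,b)=(1,0)$) as the one forcing the hypothesis $r\le (s-2)^2+s$. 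What your organization buys is uniformity: the overcount bookkeeping $O'\le O$ absorbs the multi-component merging case without a separate lemma, so Cases~I--III of the paper collapse into one argument. One small wording point: your inductive claim should be stated for \emph{any} connected subset $C$ of $\{H^1,\dots,H^t\}$ in the shared-edge graph (not just for components of $\mathscr{G}_t$), since the $C_i$ you recurse on are components of $C\setminus\{H^*\}$, not of $\mathscr{G}_t$; this is clearly what your induction on $|C|$ is doing, but it deserves to be said explicitly.
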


Note that Lemma \ref{feb243pm} gives $\mathscr{G}_m(e)$ is a connected graph, and hence $l_m=1$, and $k_m = 0$. Now the following lemma is immediate from Lemma \ref{feb421pm}. 
\begin{lem}\label{feb242pmsun}
Recall that $\lambda(r,s) = \frac{rs-2}{r+s-2}$, under the conditions of Lemma \ref{feb421pm} we have
\beq\label{eqsun242feb}
e(F(e)) \geq \lambda(r,s) \left(v(F(e)) - 2\right) + 1.
\eeq
\end{lem}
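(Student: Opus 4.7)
My plan is to recognize that Lemma \ref{feb242pmsun} is essentially Lemma \ref{feb421pm} specialized to $t=m$, combined with the connectivity statement of Lemma \ref{feb243pm}. The first step is to identify the witness set $F(e)$ with the graph $B_m$. Indeed, the Red-Edge Algorithm explicitly yields $F(e) = (H^1 \cup \cdots \cup H^m) \setminus \{e_1,\ldots,e_m\}$, which matches the defining formula $B_t = (H^1 \cup \cdots \cup H^t)\setminus\{e_1,\ldots,e_t\}$ at $t=m$. Hence $e(F(e)) = e(B_m)$ and $\nu(F(e)) = \nu(B_m)$.

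Next I would invoke Lemma \ref{feb243pm} to conclude that $\mathscr{G}_m$ is connected, so $l_m = 1$. For the quantity $k_m$, I would argue that every vertex $\nu \in V(B_m)$ belongs to at least one $H^i$ (by the very definition of $B_m$), and since $\mathscr{G}_m$ has a single component, $c_m(\nu) = 1$ for each such $\nu$. Therefore $k_m = \sum_{\nu \in V(B_m)}(c_m(\nu)-1) = 0$.

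Plugging $l_m = 1$ and $k_m = 0$ into the conclusion of Lemma \ref{feb421pm}, I get
\begin{equation*}
e(F(e)) \;\geq\; \lambda(r,s)\bigl(\nu(F(e)) - (r+s)\bigr) + (rs-1).
\end{equation*}
To reduce this to the desired bound, I would use the algebraic identity $\lambda(r,s)(r+s-2) = rs-2$ that is built into the definition of $\lambda(r,s)$. This gives
\begin{equation*}
-\lambda(r,s)(r+s) + (rs-1) = -\lambda(r,s)\cdot 2 - \lambda(r,s)(r+s-2) + (rs-1) = -2\lambda(r,s) + 1,
\end{equation*}
so the right-hand side rewrites exactly as $\lambda(r,s)(\nu(F(e)) - 2) + 1$, yielding \eqref{eqsun242feb}.

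There is really no obstacle here once Lemma \ref{feb421pm} is granted: the only nontrivial piece is the identification $F(e) = B_m$ and the verification that connectedness of $\mathscr{G}_m$ forces $k_m = 0$, both of which follow directly from the definitions. The content of the estimate is entirely in Lemma \ref{feb421pm}; this lemma simply packages the $t=m$ specialization into a clean statement about witness sets that is suitable for the subsequent union-bound / first-moment argument over all possible witness sets.
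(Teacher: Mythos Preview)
Your proposal is correct and follows exactly the approach the paper takes: the paper remarks just before the lemma that Lemma~\ref{feb243pm} gives $l_m=1$ and $k_m=0$, so the result is immediate from Lemma~\ref{feb421pm} at $t=m$, and your write-up simply spells out that substitution and the accompanying algebra.
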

Next we will show that the expected number of witness sets is asymptotically negligible when $p$ is smaller than certain threshold. For each $m \in \mathbb{N}$ and every $e \in E(K_n)$, define
\[
 Y_m(e) = |\{F \subset G_{n,p}: e \subset V(F),\text{ and } e(F) = m\geq \lambda(r,s)\left(v(F) - 2\right) + 1\}|.
 \]
 
  Here, $Y_m(e)$ counts the number of subgraphs $F$ of $G_{n,p}$ whose vertex set contains the end points of edge $e$, and has $m \geq \lambda(r,s)\left(v(F) - 2\right) + 1$ edges.
 
\begin{lem}\label{lem:thu207feb} Let $r,s \geq 3$, $e p n^{1/\lambda(r,s) } (\log{n}) {rs}\leq {\lambda(r,s)}^2$. Then there exists a constant $C(r,s)$ such that for sufficiently large $n$,
\[
\mathbb{E}(Y_m(e)) \leq \frac{C(r,s)}{n^{1/\lambda(r,s)}} \left(\frac{m+C(r,s)}{2rs\log{n}}\right)^{m-\lambda(r,s)-1},
\]
for $ m\leq rs \log {n}$.
\end{lem}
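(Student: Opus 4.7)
The proof is a first-moment calculation. Enumerating over the possible number of vertices $\nu = \nu(F)$, the defining constraint $m \geq \lambda(r,s)(\nu-2)+1$ restricts $\nu$ to $2 \leq \nu \leq \nu_{\max} := 2 + (m-1)/\lambda(r,s)$. Writing $\lambda = \lambda(r,s)$, and counting subgraphs by choosing the remaining $\nu-2$ vertices and then the $m$ edges among all possible $\binom{\nu}{2}$ edges,
$$
\mathbb{E}[Y_m(e)] \;\leq\; \sum_{\nu = 2}^{\lfloor \nu_{\max} \rfloor} \binom{n-2}{\nu-2}\binom{\binom{\nu}{2}}{m} p^{m}.
$$

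First I would show this sum is dominated by its last term. The ratio of successive summands picks up a factor $(n-\nu)/(\nu-1)$, which is of order $n/\nu_{\max}$ and therefore $\gg 1$ for $n$ large, while the accompanying ratio of binomials $\binom{\binom{\nu+1}{2}}{m}/\binom{\binom{\nu}{2}}{m}$ is only polynomially large in $m$. Hence the sum is bounded by a constant times the $\nu = \nu_{\max}$ term.

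Second, I apply the standard estimates $\binom{n-2}{\nu_{\max}-2} \leq (en/(\nu_{\max}-2))^{\nu_{\max}-2}$ and $\binom{\binom{\nu_{\max}}{2}}{m} \leq (e\nu_{\max}^{2}/(2m))^{m}$. Substituting $\nu_{\max} - 2 = (m-1)/\lambda$ and writing $n^{(m-1)/\lambda} p^m = n^{-1/\lambda}(n^{1/\lambda}p)^m$, one obtains
$$
\mathbb{E}[Y_m(e)] \;\leq\; \frac{\mathrm{const}}{n^{1/\lambda}} \left(\frac{e\lambda}{m-1}\right)^{(m-1)/\lambda} \left(\frac{e(2\lambda+m-1)^{2}\,n^{1/\lambda}p}{2m\lambda^{2}}\right)^{m}.
$$
The hypothesis rearranges to $n^{1/\lambda}p \leq \lambda^{2}/(e\,rs\log n)$, which when substituted cancels the factors $e$ and $\lambda^{2}$ to yield
$$
\mathbb{E}[Y_m(e)] \;\leq\; \frac{\mathrm{const}}{n^{1/\lambda}} \left(\frac{e\lambda}{m-1}\right)^{(m-1)/\lambda} \left(\frac{(2\lambda+m-1)^{2}}{2m\,rs\log n}\right)^{m}.
$$

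Third, I convert to the target form. The elementary inequality $(2\lambda+m-1)^{2}/m \leq m + (4\lambda^{2}-1)$ upgrades the second factor to $\bigl((m+C_{1})/(2rs\log n)\bigr)^{m}$ with $C_{1} = 4\lambda^{2}-1$. The remaining decaying factor $(e\lambda/(m-1))^{(m-1)/\lambda}$ is then used to lower the exponent from $m$ down to $m-\lambda-1$. The cleanest way is to split on whether $(m+C_{1})/(2rs\log n) \leq 1$ or not: in the first regime the exponent can be freely decreased since the base is at most one, while in the second regime (which forces $m \gtrsim 2rs\log n$, hence $m$ is large) the super-polynomial decay of $(e\lambda/(m-1))^{(m-1)/\lambda}$ absorbs the $\lambda+1$ extra powers of $(m+C_{1})/(2rs\log n)$ needed to match the target exponent. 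Choosing $C(r,s)$ larger than $C_{1}$ and any constants appearing in the bookkeeping completes the proof.

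The main obstacle is the algebraic conversion in the third step: verifying uniformly in $m$ that the combination $(e\lambda/(m-1))^{(m-1)/\lambda}\bigl((m+C_{1})/(2rs\log n)\bigr)^{\lambda+1}$ is bounded by a constant depending only on $r,s$. The small-$m$ and large-$m$ regimes behave differently, and the two-case split above is what makes the estimate go through; the rest is routine first-moment bookkeeping.
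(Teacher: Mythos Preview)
Your proposal is correct and follows the same first-moment strategy as the paper: bound $\mathbb{E}[Y_m(e)]$ by summing over the admissible vertex counts, push everything to the maximal vertex count allowed by $m\ge\lambda(\nu-2)+1$, and then feed in the hypothesis $n^{1/\lambda}p\le \lambda^2/(e\,rs\log n)$.

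The only differences are cosmetic. The paper does not argue that the last summand dominates; it simply replaces $\binom{\binom{\nu}{2}}{m}$ uniformly by $\binom{l^2/2}{m}$ (with $l=\nu_{\max}$) and uses $\sum_{j\le l-2}\binom{n}{j}\le 2n^{l-2}$, arriving at $2n^{l-2}(epl^2/2m)^m$ in one line. It then writes the exponent as $\lambda(l-2)$ rather than $m$, and uses the maximality of $l$ (so $\lambda(l-2)\ge m-\lambda-1$) together with the base being at most $1$ to drop to the target exponent. In particular the paper invokes $m\le rs\log n$ explicitly, so the base $(m+C)/(2rs\log n)$ is automatically $\le 1$ and your ``second regime'' never arises. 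Your extra factor $(e\lambda/(m-1))^{(m-1)/\lambda}$ is bounded by $e$ (maximized at $m-1=\lambda$), so it is harmless, and your inequality $(2\lambda+m-1)^2/m\le m+4\lambda^2-1$ is correct for $m\ge 1$.

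One small remark on your step~1: the observation that the edge-binomial ratio is ``only polynomially large in $m$'' is beside the point; what you actually need is that it is $\ge 1$, so the overall ratio of successive summands is at least $(n-\nu)/(\nu-1)\ge 2$ once $\nu_{\max}=o(n)$, giving a geometric comparison with the last term. This (like the paper's use of $l\le C\log n$) implicitly requires $m=O(\log n)$, which is exactly the range in which the lemma is applied.
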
  
\begin{proof}[Proof of Lemma \ref{lem:thu207feb}]
 Now for a fixed $m$, let $l \in \mathbb{N}$ be maximal such that $m\geq \lambda(r,s)\left(l - 2\right) + 1$. Therefore $v(F)\leq l$, and hence
 $$
 \mathbb{E}(Y_m(e)) \leq \sum_{j=0}^{l-2} {n \choose j} {{j\choose 2} \choose m}p^{m} \leq \sum_{j=0}^{l-2} {n \choose j} {{l \choose 2} \choose m}p^{m} \leq \sum_{j=0}^{l-2} {n \choose j} {{l^2/2} \choose m}p^{m} \leq {2 n^{l-2}}{\left(\frac{epl^2}{2m}\right)^m}. 
 $$
 
 The first inequality is a crude upper bound of number of possible $F$ that has at most $l$ vertices and exactly $m$ edges. Note that the sum runs up to $l-2$ since $F$ must contain the endpoints of the edge $e$. In the last inequality we used 
 $$ \sum_{j=0}^{l-2} {n \choose j} \leq \sum_{j=0}^{l-2} {n^ j} \leq n^{l-2}\left(1+\frac{1}{n}+\frac{1}{n^2}+\ldots \right) \leq 2n^{l-2}.$$ 
  Since $m\leq rs \log {n}$, for a sufficiently large constant ${C}(r,s)$ we have  $l \leq \frac{m-1}{\lambda(r,s)} +2 \leq {C}(r,s) \log {n}$. Therefore $p l^2 = o(1)$, and consequently $\frac{epl^2}{2m} =o(1)$. We also have $m \geq \lambda(r,s)\left(l - 2\right) + 1$. Combining these and the last display we have
 $$
 \mathbb{E}(Y_m(e)) \leq {2 n^{l-2}} {\left(\frac{epl^2}{2m}\right)^m} \leq 2 \left(n^{1/\lambda(r,s)} \frac{epl^2}{2m}\right)^{\lambda(r,s) \left(l - 2\right)}{\left(\frac{epl^2}{2m}\right)} . 
 $$
 
 Let $C(r,s)>0$ be large enough such that $m(m+C(r,s)) \geq (m+2\lambda(r,s))^2\geq (\lambda(r,s) l)^2$.  Then we will have
 \[
 \frac{l^2}{m} \leq \frac{m+C(r,s)}{{\lambda(r,s)}^2}.
 \]
 Hence using $m\leq rs \log{n}$, we have (enlarging the constant $C(r,s)$ if needed)
\[ \frac{ep l^2}{m} \leq \frac{C(r,s)}{n^{1/\lambda(r,s)}}  \]
 Using the last three displays we immediately have
 \begin{align*}
 \mathbb{E}(Y_m(e)) &\leq \frac{C(r,s)}{n^{1/\lambda(r,s)}} \left(n^{1/\lambda(r,s)} \frac{ep({m+C(r,s)})}{2 {\lambda(r,s)}^2}\right)^{\lambda(r,s) \left(l - 2\right)} \\
 &\leq \frac{C(r,s)}{n^{1/\lambda(r,s)}} \left(\frac{m+C(r,s)}{2rs\log{n}}\right)^{\lambda(r,s) \left(l - 2\right)}.
\end{align*}
Finally using the facts $m\leq rs \log{n}$, and the fact that $\lambda(r,s)\left(l - 2\right) > m-\lambda(r,s)-1$ (since $l$ is maximal such that $m\geq \lambda(r,s)\left(l - 2\right) + 1$) we have
\[
 \mathbb{E}(Y_m(e)) \leq \frac{C(r,s)}{n^{1/\lambda(r,s)}} \left(\frac{m+C(r,s)}{2rs\log{n}}\right)^{m-\lambda(r,s)-1}.
\]
\end{proof}
\begin{prop}\label{prop:thu217feb}
Let $r,s\geq 3$, and $e \in E(K_n)$. If $e p n^{1/\lambda(r,s) } (\log{n}) {rs}\leq {\lambda(r,s)}^2$ then 
\[
\mathbb{P}\left(e \in  {\langle G_{n,p} \rangle}_{K_{r,s}}\right) \to 0
\]
as $n \to \infty$.
\end{prop}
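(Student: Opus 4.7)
The plan is to combine the witness-set structure supplied by Lemmas~\ref{sat439pmfeb} and~\ref{feb242pmsun} with the first-moment estimate of Lemma~\ref{lem:thu207feb}. The contribution from $\{e \in G_{n,p}\}$ is $p$, and the hypothesis $e p n^{1/\lambda(r,s)}(\log n) rs \le \lambda(r,s)^2$ gives $p = O\bigl((n^{1/\lambda(r,s)}\log n)^{-1}\bigr) = o(1)$, so it suffices to bound $\mathbb{P}(e \in {\langle G_{n,p}\rangle}_{K_{r,s}} \setminus G_{n,p})$. On this event there is a witness set $F(e) \subset G_{n,p}$ with $e \subset V(F(e))$, and Lemma~\ref{feb242pmsun} gives $e(F(e)) \ge \lambda(r,s)(\nu(F(e))-2)+1 \ge rs-1$, the last step using $\nu(F(e)) \ge r+s$ (the $K_{r,s}$ completed by $e$ contributes that many vertices).

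Set $L := \log n$ and distinguish two cases. In \emph{Case (a)}, $e(F(e)) \le L$, the graph $F(e)$ is one of the subgraphs counted by $Y_m(e)$ for some $m \in [rs-1, L]$, so this case contributes at most $\sum_{m=rs-1}^{L}\mathbb{E}(Y_m(e))$. In \emph{Case (b)}, $e(F(e)) > L$, Lemma~\ref{sat439pmfeb} applied with $H = K_{r,s}$ produces an edge $f$ with $L \le e(F(f)) \le rsL$; a union bound over $f \in E(K_n)$ and over $m \in [L, rsL]$ yields a bound $\binom{n}{2}\sum_{m=L}^{rsL}\mathbb{E}(Y_m(e'))$ for any fixed reference edge $e'$, since the estimate in Lemma~\ref{lem:thu207feb} does not depend on the reference edge. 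The condition $m \le rs\log n$ implicit in that lemma is satisfied throughout both ranges because $rsL = rs\log n$.

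Writing $\eta_m := (m + C(r,s))/(2rs\log n)$, in Case (a) we have $\eta_m \le \eta_L = (2rs)^{-1}+o(1) \le 1/2$, each term is $O(n^{-1/\lambda(r,s)})$, and the whole sum is $O\bigl(\log n \cdot n^{-1/\lambda(r,s)}\bigr) = o(1)$. In Case (b), setting $x := m/\log n \in [1, rs]$, each term is, up to constants and a polylogarithmic factor, at most $n^{-1/\lambda(r,s)} n^{x\log(x/(2rs))}$. The function $x \mapsto x\log(x/(2rs))$ is convex on $(0,\infty)$, so its maximum on $[1,rs]$ is attained at an endpoint; the two candidate values are $-\log(2rs)$ at $x=1$ and $-rs\log 2$ at $x=rs$, and since $\log(2rs) \le rs\log 2$ for $r,s \ge 3$, the maximum is $-\log(2rs)$. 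Hence Case (b) is bounded by a polylogarithmic factor times $n^{2 - 1/\lambda(r,s) - \log(2rs)}$.

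Since $r,s \ge 3$ gives $2rs \ge 18$ and therefore $\log(2rs) \ge \log 18 > 2 \ge 2 - 1/\lambda(r,s)$, the exponent in Case (b) is strictly negative, so this case is also $o(1)$. The main obstacle is choosing the threshold $L$ so that the multiplicative gap $e(H) = rs$ from Lemma~\ref{sat439pmfeb} and the hypothesis $m \le rs\log n$ needed in Lemma~\ref{lem:thu207feb} are respected simultaneously, and then verifying the arithmetic inequality $\log(2rs) > 2 - 1/\lambda(r,s)$; its binding case is $r = s = 3$, where $\log 18 \approx 2.89$ comfortably exceeds $2 - 4/7 = 10/7 \approx 1.43$.
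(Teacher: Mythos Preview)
Your proof is correct and follows the same two-case strategy as the paper, splitting at $e(F(e))=\log n$ and invoking Lemmas~\ref{sat439pmfeb}, \ref{feb242pmsun}, and~\ref{lem:thu207feb} in exactly the same roles. In Case~(b) you locate the dominant summand via the convexity of $x\mapsto x\log\bigl(x/(2rs)\bigr)$ on $[1,rs]$, which is a slightly cleaner justification than the paper's monotonicity claim for the summand on $[\log n, rs\log n]$, but both routes land on the same endpoint $x=1$ and the same $o(1)$ conclusion.
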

\begin{proof}[Proof of Proposition \ref{prop:thu217feb}]
We shall prove that, for every $e \in E(K_n)$,
\[
 \mathbb{P}\left(e \in {\langle G_{n,p} \rangle}_{K_{r,s}}\right) \rightarrow 0
\]
as $n \ra \infty$. For an edge $e \in {\langle G_{n,p} \rangle}_{K_{r,s}} $,  consider the set $F=F(e) \subset G_{n,p}$, obtained using the Witness-Set Algorithm. There can be two cases: in the first case $e \in G_{n,p}$, or in the second case (when $e \notin G_{n,p}$), we must have $e \in {\langle G_{n,p} \rangle}_{K_{r,s}} \setminus G_{n,p}$ using Lemma \ref{feb242pmsun} we will get an Witness-Set $F=F(e)$ such that
\[
e(F) \geq \lambda(r,s) \left(v(F) - 2\right) + 1.
\]
Now let us assume $e(F) \leq \log{n}$ for the time being. 
 In the second case we must have $Y_{m}(e) \geq 1$ for some 
\[
\lambda(r,s) \left(r+s - 2\right) + 1\leq m \leq \log {n}.
\]  
Using Markov's inequality and Lemma \ref{lem:thu207feb} we have the probability that  $e \in {\langle G_{n,p} \rangle}_{K_{r,s}}$ is at most
\[
p +\sum_{m= \lambda(r,s) \left(r+s - 2\right) +1}^{\log {n}} \mathbb{E}(Y_m(e)) \leq p +\frac{C(r,s)}{n^{1/\lambda(r,s)}} \sum_{m= \lambda(r,s) \left(r+s - 2\right) +1}^{\log {n}} \left(\frac{m+C(r,s)}{2rs\log{n}}\right)^{m-\lambda(r,s)-1}.
\]
Since $r+s\geq 6$ and $\lambda(r,s)>1$ we have each term in the last sum going to zero as $n \ra \infty$ and there are at most $\log {n}$ terms. Hence the factor $n^{-1/\lambda(r,s)}$ ensures the whole term in the last display goes to zero as $n \to \infty$. \par 
We are now left with the part $e(F)> \log {n}$. In this case Lemma \ref{sat439pmfeb}, gives that there must be an edge $f$ in $K_n$ such that $\log {n} \leq e(F(f)) \leq rs \log {n}$. Therefore $Y_m(f) \geq 1$ for some $\log {n} \leq m \leq rs \log {n}$ Now using Lemma \ref{lem:thu207feb}, the union bound, and Markov's inequality the probability that such an edge exists is at most
\[
{n \choose 2} \frac{C(r,s)}{n^{1/\lambda(r,s)}} \sum_{m=\log{n}}^{rs \log {n}} \left(\frac{m+C(r,s)}{2rs\log{n}} \right)^{m-\lambda(r,s) -1} \leq n^2 \frac{C(r,s)}{n^{1/\lambda(r,s)}} \left(\frac{2}{rs}\right)^{\log{n} - \lambda(r,s)-1}.
\]
Here we use the fact that the function $\left(\frac{x+C(r,s)}{2rs\log{n}} \right)^{x}$ is decreasing in the interval $[\log{n}, rs\log{n}]$ for sufficiently large $n$ and $rs\geq 9$. To complete the proof we again use $rs\geq 9$, and ensure that the last display is converging to zero as $n\to \infty$.
\end{proof}
Now we will prove Lemma \ref{feb421pm}, which will complete the proof of the lower bound. 
\begin{proof}[Proof of Lemma \ref{feb421pm}] To simplify the notation, in this proof we will write $B_{t}(e)=B_t$, and $\mathscr{G}_{t}(e)=\mathscr{G}_{t}$. We will do induction on $t$. If $t=1$ then $v(B_1) = r+s$, $e(B_1)=rs-1$, $l_1=1$, and $k_1=0$. Therefore
\[
e(B_1) = rs-1 = \frac{rs-2}{r+s-2} \left(v(B_{1}) + k_{1} - l_{1}(r+s))\right) + l_1 (rs-1).
\]
Thus the lemma holds for $t=1$. For $t\geq 2$ we break it down into three cases.
\subsection*{Case I} $l_t = l_{t-1} +1$. \par
In this case all edges of $K_{r,s}^t$ are new (an edge $e \in K_{r,s}^t$ is called new when $e \notin K_{r,s}^{i}$ for $i=1,2,\ldots,t-1$). Indeed, otherwise if it shares an edge with already existing edges then it must belong to one of the components of $\mathscr{G}_{t-1}$ but $\mathscr{G}_{t}$ has one more component than $\mathscr{G}_{t-1}$. Therefore
\[
e(B_t) = e(B_{t-1}) +rs-1.
\]

Let $b$ be the number of vertices of $K_{r,s}^t$ that are not new.  Hence $v(B_t) = v(B_{t-1}) +r+s-b$ and $k_t = k_{t-1} +b$ (these $b$ vertices are in one more component in $\mathscr{G}_{t}$ than in $\mathscr{G}_{t-1}$). Let us now use these and the induction hypothesis for $t-1$ to get 
\begin{align*}
e(B_t) &\geq \left(\frac{rs-2}{r+s-2}\right)\left(v(B_{t-1}) + k_{t-1} - l_{t-1}(r+s))\right) + l_{t-1} (rs-1) +rs-1 \\
&= \left(\frac{rs-2}{r+s-2}\right)\left(v(B_{t}) -r-s+b + k_{t} -b - (l_{t} -1)(r+s))\right) +(l_{t-1}+1) (rs-1) \\
&= \left(\frac{rs-2}{r+s-2}\right)\left(v(B_{t}) + k_{t}  - l_{t} (r+s))\right) +l_{t} (rs-1).
\end{align*}
Therefore the lemma is proved for $l_t = l_{t-1} +1$.
\subsection*{Case II} $l_t = l_{t-1}$. \par
In this case $K_{r,s}^t$ shares at least one edge with some component $C_1$ of $\mathscr{G}_{t-1}$. Also all other edges that are not shared with $C_1$ must be new. Let $b$ be the number of vertices of $K_{r,s}^t\setminus C_1$ which are not new and $a$ be the number of vertices in $K_{r,s} \cap C_1$. We have the following inequality now
\[
e(B_t) \geq e(B_{t-1}) +rs-1 - |\{\text{edges shared with } C_1\}|
\]
Using the induction hypothesis 
\[
e(B_t) \geq \left(\frac{rs-2}{r+s-2}\right)\left(v(B_{t-1}) + k_{t-1} - l_{t-1}(r+s))\right) + l_{t-1} (rs-1) +rs-1 - |\{\text{edges shared with } C_1\}|
\]
Here note that $v(B_t) = v(B_{t-1})+r+s-a-b$, $k_t = k_{t-1} +b$, Thus
\[
e(B_t) \geq \left(\frac{rs-2}{r+s-2}\right)\left(v(B_{t})-r-s+a + k_{t} - l_{t}(r+s))\right) + l_{t} (rs-1) +rs-1 - |\{\text{edges shared with } C_1\}|
\]
Finally we have 
\begin{align*}
e(B_t) \geq \left(\frac{rs-2}{r+s-2}\right)&\left(v(B_{t}) + k_{t} - l_{t}(r+s))\right)+ l_{t} (rs-1)\\
& +(a-r-s)\left(\frac{rs-2}{r+s-2}\right) +rs-1 - |\{\text{edges shared with } C_1\}|.
\end{align*}
Therefore we will be done if we show that 
\beq\label{eqn:fri151feb}
(a-r-s)\left(\frac{rs-2}{r+s-2}\right) +rs-1 - |\{\text{edges shared with } C_1\}| \geq 0.
\eeq
Divide the vertices of $K_{r,s}$ into an $r$-subset, and an $s$-subset, such that each vertices one of these subset has an edge to every vertices of the other subset. Now we denote $|K_{r,s}^t \cap C_1|=a$, and let $K_{r,s}^t \cap C_1$ consists of $P$ vertices from the $r$-subset and $Q$ vertices from the $s$-subset. Therefore $|\{\text{edges shared with } C_1\}| = PQ$. Since at least one edge is shared $1\leq P \leq r$, and $1\leq Q \leq s$. Therefore showing \eqref{eqn:fri151feb} reduces to the problem to showing 
\beq\label{eqn:fri203feb}
(P+Q-r-s)\left(\frac{rs-2}{r+s-2}\right) +rs-1 - PQ \geq 0
\eeq
subject to the conditions $1\leq P \leq r$, and $1\leq Q \leq s$. Let us prove this with the additional constraint $1\leq P +Q \leq r+s-1$. Note that if we want to show \eqref{eqn:fri203feb} for a fixed $P$, then it is sufficient to check this for the endpoints i.e. $Q=1$ and $Q=s$ (since for a fixed $P$ \eqref{eqn:fri203feb} is linear in $Q$). Therefore we must check for each $1\leq P \leq r-1$ the following hold
\[
(P-1) \left(\frac{rs-2}{r+s-2}\right) +1- P \geq 0, \text{  and  } (P+s-2)\left(\frac{rs-2}{r+s-2}\right) +1-sP\geq 0.
\]
Again both these equations are linear in $P$ and therefore we check these equations for $P=1, r-1$. For $P= 1$ the first one trivially holds, and the second one is
\[
 (s-1)\left(\frac{rs-2}{r+s-2}\right) +1-s = (s-1)\left(\frac{rs-2}{r+s-2} -1\right)
 \]
 It is easy to show that $\frac{rs-2}{r+s-2}$ is non-decreasing both in $r$ and $s$ and therefore the last expression is non-negative as long as $r,s \geq 2$.
Similarly, for $P=r-1$ the first one 
\[
(r-2) \left(\frac{rs-2}{r+s-2}\right) +1- (r-1) = (r-2)\left(\frac{rs-2}{r+s-2} - 1\right) \geq 0.
\]
 For $P=r-1$ the second one boils down to the condition
\[
(r+s-3)\left(\frac{rs-2}{r+s-2}\right) +1-s(r-1) = \left(\frac{(s-2)^2+s -r}{r+s-2}\right) \geq 0.
\]
Therefore  we have shown \eqref{eqn:fri203feb} for $1\leq P\leq r-1$, and $1 \leq Q\leq s$. Let us check this for $P= r$ and $Q= s-1$. Indeed, since $r\geq s$, we have 
\[
(r+s-1-r-s)\left(\frac{rs-2}{r+s-2}\right) +rs-1 - r(s-1) = \frac{(r-2)^2 +r-s}{r+s-2} \geq 0. 
\]
The proof of is complete as long as $|K_{r,s}^t \cap C_1| \leq r+s-1$. Finally if $|K_{r,s}^t \cap C_1| = r+s$, then no new vertices were added by addition of $K_{r,s}^t$ and $v(B_t) = v(B_t-1)$. Therefore $e(B_t) \geq e(B_{t-1})$. In this case we also have $|K_{r,s}^t \cap C_1^c|=0$ and hence $k_t=k_{t-1}$. Hence the proof is complete when $l_t =l_{t-1}$.
\subsection*{Case III} $l_t < l_{t-1}$. \par
Let $m = l_{t-1}-l_{t}+1$. $K_{r,s}^t$ shares at least one edge with edge with the components $C_1, C_2,\ldots,C_m$, and it does not share any edge with the other components. Note that $m\geq 2$. Therefore
\[
e(B_t) \geq e(B_{t-1}) +rs-1-\sum_{i=1}^m|\{\text{edges shared with } C_i\}|.
\]
Let $\mathscr{P}(m)$ denote the set of all subsets of $\{1,2,\ldots,m\}$ and for $S \in \mathscr{P}(m)$ define
\[
a_S = |\{v \in K_{r,s}^t: v \in C_j \Leftrightarrow j \in S\}|.
\]
Here $a_S$ counts the number of nodes of $K_{r,s}^t$ that are in $C_j$ for $j \in S$, and are not in any other component.
Let $a = |K_{r,s}^t \cap \{C_1 \cup\ldots\cup C_m \}|$, and $b =$ number of vertices in $K_{r,s}^t\setminus \{C_1 \cup\ldots\cup C_m \}$ which are not new. Here $v(B_t)=v(B_{t-1})+r+s-a-b$. Since $K_{r,s}^t$ merges the components $\{C_1 \cup\ldots\cup C_m \}$ we have if $S=\{j \in [m]: v \in C_j\}$ then $c_t(v) = c_{t-1}(v)-|S|+1$. Therefore $k_t \leq k_{t-1}+b-c$ where $c = \sum_{S \in \mathscr{P}(m)}a_S\left(|S|-1\right)$. \par
Now we have
\begin{align*}
e(B_t) &\geq e(B_{t-1}) +rs-1-\sum_{i=1}^m|\{\text{edges shared with } C_i\}| \\
&\geq \left(\frac{rs-2}{r+s-2}\right)\left(v(B_{t-1}) + k_{t-1} - l_{t-1}(r+s))\right) + l_{t-1} (rs-1)\\
& +rs-1-\sum_{i=1}^m|\{\text{edges shared with } C_i\}|
\end{align*}
Plugging in the estimates we get
\begin{align*}
e(B_t)&\geq \left(\frac{rs-2}{r+s-2}\right)\left(v(B_{t}) + k_{t} - l_{t}(r+s))\right) + l_{t} (rs-1)\\
& + \left(\frac{rs-2}{r+s-2}\right)\left(a+c-2m\right)+m -\sum_{i=1}^m|\{\text{edges shared with } C_i\}|.
\end{align*}
Therefore we will be done if we show 
\[
\left(\frac{rs-2}{r+s-2}\right)\left(a+c-2m\right)+m \geq \sum_{i=1}^m|\{\text{edges shared with } C_i\}|.
\]
Now let us note that $a =  \sum_{S \in \mathscr{P}(m)}{a_S}$, and hence $a+c = \sum_{S \in \mathscr{P}(m)}a_S|S|$.
Therefore we will have to prove
\begin{align}\label{eqn:feb244sat}
\sum_{i=1}^m|\{\text{edges shared with } C_i\}| \leq \left(\frac{rs-2}{r+s-2}\right)\left(\sum_{S \in \mathscr{P}(m)}a_S|S|-2m\right)+m 
\end{align}

Note that $|K_{r,s}^t \cap C_j |= \sum_{\{S \in \mathscr{P}(m): S\ni j\}}a_S$, and consequently we have the following simple but important identity
\[
\sum_{j=1}^m{|K_{r,s}^t \cap C_i |} = \sum_{i=1}^m{\sum_{\{S \in \mathscr{P}(m): S\ni i\}}a_S}= \sum_{S \in \mathscr{P}(m)}a_S|S|.
\]
Now the inequality \eqref{eqn:feb244sat} becomes equivalent to the following
\[
\sum_{i=1}^m|\{\text{edges shared with } C_i\}| \leq \left(\frac{rs-2}{r+s-2}\right)\left(\sum_{i=1}^m{|K_{r,s}^t \cap C_i |}-2m\right)+m
\]

Next step is to turn it into an optimization problem. Let $K_{r,s}^t \cap C_i$ consists of $P_i$ vertices from the $r$-subset and $Q_i$ vertices from the $s$-subset of $K_{r,s}^t$ for $i=1,2,\ldots,m$. We will be done if we prove the following for $ P_i\geq 1$, $Q_i\geq 1$, and $\sum_{i=1}^m P_i \leq r$, $\sum_{i=1}^m Q_i \leq s$.

\[
\sum_{i=1}^m P_iQ_i  \leq \left(\frac{rs-2}{r+s-2}\right)\left(\sum_{i=1}^m{(P_i+Q_i)}-2m\right)+m,
\]
Lemma \ref{lem:sat6o3feb} gives that this is true with an additional constraint $\sum_{i=1}^m (P_i+Q_i) \leq r+s-1$. The only case remains when $\sum_{i=1}^m (P_i+Q_i) = r+s$ . In this case left side of the equation is at most $rs-m$ To see this,

 We have $\sum_{i=1}^mP_i=r,\sum_{i=1}^mQ_i = s$, and using Cauchy-Schwarz $(\sum_{i=1}^m P_iQ_i)^2 \leq \sum_{i=1}^m P_i^2 \sum_{i=1}^m Q_i^2$. Also $\sum_{i=1}^m P_iQ_i$ will be maximized if  $P_i =C Q_i$ and therefore $C=r/s$. Hence the maximum value is 
\begin{align*}
\sum_{i=1}^m P_iQ_i &= \frac{r}{s}\sum_{i=1}^mQ_i^2 = \frac{r}{s}\left((\sum_{i=1}^mQ_i)^2 - \sum_{i=1}^m\sum_{j=1, j \neq i}^mQ_iQ_j\right) \leq \frac{r}{s}\left(s^2 - \sum_{i=1}^m\sum_{j=1, j \neq i}^mQ_j\right) \\
&=\frac{r}{s}\left(s^2 - \sum_{i=1}^m(s-Q_i)\right) = \frac{r}{s}\left(s^2 - ms+s\right) = rs-rm+r.
\end{align*}
Now $rs-rm+r\leq rs-m$ iff $m \geq r/{r-1}$. Which is trivially true since $m\geq 2$. Therefore the right side is
\begin{align*}
\left(\frac{rs-2}{r+s-2}\right)\left(r+s-2m\right)+m &= rs-2+(2-2m)\left(\frac{rs-2}{r+s-2}\right) +m \\
&\geq rs-2+(2-2m) +m =rs-m,
\end{align*}
and the proof therefore is complete.
\end{proof} 
Let us prove the following technical Lemma that we have used in the proof of Lemma \ref{lem:thu207feb}.
\begin{lem}\label{lem:sat6o3feb}
Let $m\geq2$ and $3 \leq s\leq r$, $ P_i\geq 1$, $Q_i\geq 1$, and $\sum_{i=1}^m P_i \leq r$, $\sum_{i=1}^m Q_i \leq s$,  $\sum_{i=1}^m (P_i+Q_i) \leq r+s-1$, then
\[
\sum_{i=1}^m P_iQ_i  \leq \left(\frac{rs-2}{r+s-2}\right)\left(\sum_{i=1}^m{(P_i+Q_i)}-2m\right)+m,
\]

\end{lem}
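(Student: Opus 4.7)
The plan is to reduce the multi-term inequality to a pointwise bound that is then summed over $i$. Writing $\lambda := \lambda(r,s) = (rs-2)/(r+s-2)$, the claimed inequality rearranges to
\[
\sum_{i=1}^{m}\bigl[\lambda(P_i+Q_i-2)+1-P_iQ_i\bigr]\ \geq\ 0,
\]
so it suffices to prove the single pairwise bound $P_iQ_i \leq \lambda(P_i+Q_i-2)+1$ for every $i$ and then add them.

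The crucial initial observation is that the hypothesis $m\geq 2$ forces $P_i \leq r-1$ and $Q_i \leq s-1$ for each $i$: since all other $P_j$ are at least $1$ and $\sum_j P_j \leq r$, we get $P_i \leq r-(m-1)\leq r-1$, and analogously for $Q_i$. So it remains to establish
\[
f(P,Q)\ :=\ \lambda(P+Q-2)+1-PQ\ \geq\ 0 \qquad \text{on } 1\leq P\leq r-1,\ 1\leq Q\leq s-1.
\]
Because $f$ is affine in each variable separately (with the other fixed), its minimum on this rectangle is attained at one of the four corners, and I need only verify it there.

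The three ``easy'' corners are essentially immediate: $f(1,1)=0$; $f(1,s-1)=(\lambda-1)(s-2)$; and $f(r-1,1)=(\lambda-1)(r-2)$, all nonnegative because $\lambda\geq 1$ (equivalently $(r-1)(s-1)\geq 1$, which holds for $r,s\geq 3$). The only corner demanding real computation is $(r-1,s-1)$. Expanding $\lambda(r+s-4)+1-(r-1)(s-1)$ and clearing the common denominator $r+s-2$, I expect the numerator to collapse to
\[
r^2+s^2-4r-4s+8\ =\ (r-2)^2+(s-2)^2,
\]
so that $f(r-1,s-1)=\bigl((r-2)^2+(s-2)^2\bigr)/(r+s-2)\geq 0$.

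This last simplification is the only nontrivial piece of arithmetic, and it is the main obstacle only in the bookkeeping sense. Notably, the troublesome corner $(P,Q)=(r,s-1)$ that forced the assumption $r\leq (s-2)^2+s$ inside Case II of Lemma~\ref{feb421pm} never arises here: the hypothesis $m\geq 2$ together with $P_j\geq 1$ prevents any single $P_i$ from saturating the budget $r$. This is precisely why Lemma~\ref{lem:sat6o3feb} can be stated cleanly for all $r\geq s\geq 3$ without that extra restriction, and why the per-term approach succeeds in full generality.
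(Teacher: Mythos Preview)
Your proof is correct and takes a genuinely different, more direct route than the paper. The paper argues by induction on $m$: a laborious base case $m=2$ checking seven corner tuples $(P_1,Q_1,P_2,Q_2)$, followed by an inductive step that isolates the $m$-th summand and proves precisely your per-term inequality $P_mQ_m \leq \lambda(P_m+Q_m-2)+1$ on $[1,r-1]\times[1,s-1]$. You instead observe from the outset that the target inequality decomposes as $\sum_i f(P_i,Q_i)\geq 0$ with $f(P,Q)=\lambda(P+Q-2)+1-PQ$, so it suffices to show $f\geq 0$ termwise; since $m\geq 2$ forces $P_i\leq r-1$ and $Q_i\leq s-1$, only four corners need checking, the hardest yielding $f(r-1,s-1)=\bigl((r-2)^2+(s-2)^2\bigr)/(r+s-2)\geq 0$. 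Your argument is shorter, makes the separate $m=2$ verification redundant, and in fact never invokes the hypothesis $\sum_i(P_i+Q_i)\leq r+s-1$, so it actually proves a slightly stronger statement than the lemma as phrased. One minor remark on your closing commentary: the corner in Case~II of Lemma~\ref{feb421pm} that forces the balance condition $r\leq (s-2)^2+s$ is $(P,Q)=(r-1,s)$ rather than $(r,s-1)$, but your point---that $m\geq 2$ rules out both---is unaffected.
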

\begin{proof}[Proof of Lemma \ref{lem:sat6o3feb}]
We will use induction on $m$. For $m=2$, we need to show
\[
P_1Q_1+P_2Q_2\leq \left(\frac{rs-2}{r+s-2}\right)(P_1+Q_1+P_2+Q_2 - 4)+2.
\]
If we fix any $3$ of $(P_1, Q_1, P_2, Q_2)$ then it becomes linear in the remaining variables. Therefore it is sufficient to verify this for the endpoints: $(1,1,r-1,1), (1,1,1,s-1), (1,1,1,1), (r-1,1,1,s-2), (r-2,1,1,s-1), (r-1,s-2,1,1), (r-2,s-1,1,1)$. \par
It trivially holds for $(1,1,1,1)$. For $(1,1,1,s-1), (1,1,r-1,1)$, it is easy using the fact that for $r,s\geq3$,  $\frac{rs-2}{r+s-2}\geq 1$.For $(1,1,r-1,s-2)$, (using $r,s\geq 3$, and $2x^2-7x$ is increasing for $x\geq 7/4$ )
\[
\left(\frac{rs-2}{r+s-2}\right)(r+s-1 - 4)+2 - 1-(r-1)(s-2) = \frac{2r^2-7r+s^2-5s+12}{r+s-2} \geq \frac{3(s-2)^2}{(r+s-2)}\geq0.
\]
For $(r-2,s-1,1,1)$ we again use $r,s\geq 3$, and $x^2-5x$ is increasing for $x\geq 5/2$,
\[
\left(\frac{rs-2}{r+s-2}\right)(r+s-1 - 4)+2 - 1-(r-1)(s-2) = \frac{2s^2-7s+r^2-5r+12}{r+s-2} \geq \frac{3(s-2)^2}{(r+s-2)}\geq0.
\] 
Finally for both the cases $(r-1,1,1,s-2)$, and $(r-2,1,1,s-1)$ we need to verify the same inequalities and since $\frac{rs-2}{r+s-2} \geq 1$ we immediately have
\[
\left(\frac{rs-2}{r+s-2}\right)(r+s-1 - 4)+2 - (r-1+s-2) = (r+s-5)\left(\frac{rs-2}{r+s-2} -1\right) \geq 0.
\]

Hence the lemma is proved for $m=2$. Now assume that the lemma holds for $m-1$. Then 
\begin{align*}
\sum_{i=1}^m P_iQ_i &\leq \left(\frac{rs-2}{r+s-2}\right)\left(\sum_{i=1}^{m-1}{(P_i+Q_i)}-2(m-1)\right)+m-1 + P_mQ_m \\
&=\left(\frac{rs-2}{r+s-2}\right)\left(\sum_{i=1}^{m}{(P_i+Q_i)}-2m\right)+m+(2-P_m-Q_m)\left(\frac{rs-2}{r+s-2}\right)-1 + P_mQ_m. 
\end{align*}
Since $m\geq 2$, $\sum_{i=1}^mP_i \leq r$, and $P_i \geq 1$, we have $P_m \leq r - \sum_{i=1}^{m-1}P_i \leq r-m+1 \leq r-1$ and similarly $Q_m \leq s-1$.  To finish the proof we will show for $1\leq P_m\leq r-1$, $1 \leq Q_m\leq s-1$,
\[
(2-P_m-Q_m)\left(\frac{rs-2}{r+s-2}\right)-1 + P_mQ_m \leq 0.
\]
Again it is sufficient to check for the endpoints $(1,1), (1,s-1), (r-1,1), (r-1,s-1)$. For $(1,1)$ it is trivial. For $(1,s-1)$, and $(r-1,1)$, we only need $\frac{rs-2}{r+s-2} \geq 1$. Finally for $(r-1,s-1)$
\[
(4-r-s)\left(\frac{rs-2}{r+s-2}\right)-1 + (r-1)(s-1) = \frac{-(r-2)^2-(s-2)^2}{r+s-2} \leq 0,
\]
completing the proof.
\end{proof}
\section{Upper bound for $K_{r,s}$ percolation}
For the upper bound we directly appeal to the Proposition 3 from \cite{balogh2012graph}. Let us recall the definition of balanced graph before we state the proposition. 
\begin{dfn}\label{dfnbalanced930} A graph $H$ is called balanced if $e(H) \geq 2 v(H) -2$, and 
\begin{equation}\label{sun302pmapr}
\frac{e(F)-1}{v(F)-2} \leq \lambda(H) : = \frac{e(H)-2}{v(H)-2}
\end{equation}
for every proper subgraph $F \subset H$ with $v(F) \geq 3$.
\end{dfn}
We are now ready to state the proposition that we are going to use to obtain an upper bound.
\begin{prop}\label{sun233pmapril}
If $H$ is a balanced graph then 
\begin{equation}\label{sun239apr}
p_c(n,H) \leq C {\left(\frac{\log n}{\log\log n}\right)}^{2/\lambda(H)} n^{-{1/{\lambda(H)}}}
\end{equation}
for some constant $C=C(H)>0$.
\end{prop}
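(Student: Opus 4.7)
The plan is to adapt the strategy of Proposition 3 from \cite{balogh2012graph}, originally written for $H=K_r$, to an arbitrary balanced graph $H$. The architecture is a multi-round sprinkling: split $p = p_1+p_2+\cdots+p_K$ into $K = O(\log n/\log\log n)$ batches, use $p_1$ to find a small ``seed'' that already percolates, and use the subsequent batches to iteratively absorb the remaining vertices into the percolating closure.

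For the seed step, $n^{-1/\lambda(H)}$ is the scale at which copies of $H$ begin to appear in $G_{n,p}$. With $p_1$ of order $(\log n/\log\log n)^{2/\lambda(H)}\,n^{-1/\lambda(H)}$ one searches for a tree-like union of copies of $H$, each new copy sharing $\nu(H)-1$ vertices with the partial union and contributing one fresh vertex together with the appropriate new edges, carried on $O(\log n/\log\log n)$ vertices. The hypothesis $e(H)\geq 2\nu(H)-2$ guarantees that the expected number of such extensions is bounded below at probability $p_1$, while the construction itself ensures the union already $H$-percolates. For the extension step, fix this seed $S_0$ and, for each remaining $v\in V(K_n)\setminus S_0$, expose the edges incident to $v$ in successive batches; the core first-moment estimate counts rooted extension copies of $H$ at $v$ whose other $\nu(H)-1$ vertices lie inside the current closure. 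The balance condition \eqref{sun302pmapr} is exactly what is needed to dominate, via Markov and a union bound, the expected counts of every proper subgraph $F\subsetneq H$ by that of the full copy, so that no sub-configuration becomes a bottleneck that blocks the extension.

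The principal obstacle is the simultaneous calibration of the $\log n/\log\log n$ factor against three competing requirements: (i) the seed of size $O(\log n/\log\log n)$ must exist with high probability at $p_1$; (ii) each individual extension must succeed with probability $1-o(1/n)$ so that the union bound over $n$ targets closes; and (iii) the geometric growth of the absorbed cluster through the $K$ rounds must terminate within $O(\log n/\log\log n)$ steps while keeping each $p_i$ of the stated order. Balancing these three constraints is what produces the exponent $2/\lambda(H)$ in \eqref{sun239apr}, and the balance hypotheses of Definition \ref{dfnbalanced930} are precisely the conditions under which this calibration closes in \cite{balogh2012graph}. Consequently no modification of that argument is required; it remains only to verify in the next section that the relevant $K_{r,s}$ is balanced, which is where the assumption $s\le r\le (s-2)^2+s$ from Theorem \ref{thm:sun9pmaprl} will be used.
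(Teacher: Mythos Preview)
Your approach matches the paper's: this proposition is not proved in the paper at all but is quoted verbatim as Proposition~3 of \cite{balogh2012graph}, and you correctly identify that no modification of the original argument is needed for a general balanced $H$. Your sketch of the sprinkling/seed-and-extend argument from \cite{balogh2012graph} is a reasonable summary of that source, though the paper itself simply cites the result without reproducing any of it.
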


The following Lemma establishes the upper bound by verifying that $K_{r,s}$ is a balanced graph as long as $r$ is not much larger than $s$.

\begin{lem}\label{lem:sun307pmapr}
For $r\geq s$, $K_{r,s}$ is a balanced graph for $r \geq 4$, $s\geq 3$, and $r\leq (s-2)^2 +s$.
\end{lem}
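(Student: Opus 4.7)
The plan is to verify both conditions in Definition \ref{dfnbalanced930} for $H=K_{r,s}$, noting that $e(H)=rs$, $\nu(H)=r+s$, and $\lambda(H)=(rs-2)/(r+s-2)=\lambda(r,s)$. The edge-count condition $rs\geq 2(r+s)-2$ rearranges to $(r-2)(s-2)\geq 2$, which is immediate from $r\geq 4$ and $s\geq 3$.

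For the density condition, let $F\subsetneq K_{r,s}$ be a proper subgraph with $\nu(F)\geq 3$, and let $P$ and $Q$ denote the numbers of vertices of $F$ in the two parts of the bipartition, so that $e(F)\leq PQ$ and $\nu(F)=P+Q$. Passing from $F$ to the induced bipartite subgraph $K_{P,Q}$ only enlarges the ratio $(e(F)-1)/(\nu(F)-2)$, so it suffices to treat $F=K_{P,Q}$. If $(P,Q)=(r,s)$ but $F$ is proper, then $e(F)\leq rs-1$ forces $(e(F)-1)/(r+s-2)\leq \lambda(r,s)$ directly. Otherwise $(P,Q)\neq(r,s)$, and after discarding the trivial cases $P=0$ or $Q=0$ (which give $e(F)=0$), the task reduces to showing
\begin{equation*}
f(P,Q)\;:=\;(rs-2)(P+Q-2)-(PQ-1)(r+s-2)\;\geq\;0
\end{equation*}
for all integer $(P,Q)$ with $1\leq P\leq r$, $1\leq Q\leq s$, $P+Q\geq 3$, and $(P,Q)\neq(r,s)$.

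Since $f$ is separately linear in $P$ and $Q$, for each fixed coordinate the minimum in the other coordinate is attained at an endpoint of the allowed range. A direct computation yields $f(1,Q)=(Q-1)\bigl[(r-1)(s-1)-1\bigr]\geq 0$ and $f(r,1)=(r-1)\bigl[(r-1)(s-1)-1\bigr]\geq 0$, while $f(r,s-1)=(r-2)^2+(r-s)\geq 0$ using $r\geq s$; by linearity in $Q$ this gives $f(r,Q)\geq 0$ for $1\leq Q\leq s-1$. The only remaining row is $Q=s$ (with $P=r$ excluded), where the endpoints are $f(1,s)\geq 0$ and $f(r-1,s)=(s-2)^2+s-r$, so linearity in $P$ reduces the check to the single inequality $f(r-1,s)\geq 0$.

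The one substantive step is identifying $(r-1,s)$ as the binding corner: the inequality $f(r-1,s)\geq 0$ rearranges to exactly $r\leq(s-2)^2+s$, which is the standing hypothesis. This also mirrors the condition encountered in \eqref{eqn:fri203feb} during the proof of Lemma \ref{feb421pm}, so the same algebra can simply be reused.
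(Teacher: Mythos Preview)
Your proof is correct and follows essentially the same approach as the paper: reduce the density condition to complete bipartite subgraphs $K_{P,Q}$ and then verify the extremal cases $(r,s-1)$ and $(r-1,s)$, which yield exactly the inequalities $(r-2)^2+r-s\geq 0$ and $(s-2)^2+s-r\geq 0$. Your argument is in fact more explicit than the paper's --- you spell out the reduction from an arbitrary proper $F$ to $K_{P,Q}$ and handle the case $V(F)=V(K_{r,s})$ separately --- while the paper reaches the same two endpoint checks via the monotonicity of $(PQ-1)/(P+Q-2)$ in each variable rather than your bilinearity-of-$f$ reduction.
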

\begin{proof}[Proof of Lemma \ref{lem:sun307pmapr}] The first task is to verify that $rs\geq 2(r+s)-2$. To check this observe that for $r=4$, and $s=3$ it holds, and $rs - 2(r+s)+2$ is increasing in  both $r$ and $s$ as long as $r,s >2$. Next we verify \eqref{sun302pmapr}. It is easy to see that $\frac{rs-1}{r+s-1}$ and $\frac{rs-1}{r+s-2}$ are increasing function of both $r$ and $s$. Therefore to verify \eqref{sun302pmapr} it is enough to verify for the endpoints, that is
$\frac{rs-2}{r+s-2} \geq \frac{r(s-1)-1}{r+s-1-2}$ and $\frac{rs-2}{r+s-2} \geq \frac{(r-1)s-1}{r-1+s-2}$. The first one is true since $r\geq s$, indeed
\begin{align*}
&\frac{rs-2}{r+s-2} \geq \frac{(r-1)s-1}{r-1+s-2} 
\Leftrightarrow (r-2)^2 +r-s \geq 0.
\end{align*}
The second one is true by our assumption 
\[
\frac{rs-2}{r+s-2} \geq \frac{(r-1)s-1}{r-1+s-2} \Leftrightarrow (s-2)^2 +s-r \geq 0.
\]

\end{proof}

\section{Proof of the general lower bound (Proposition \ref{prop:sun527pmapril})}

In this section we obtain a lower bound for $p_{c}(n,K_{r,s})$ in the unbalanced case. To see this note that for a graph $G$ on $n$ vertices if we have ${\langle G \rangle}_{K_{r,s}} = K_n$, then we also have ${\langle G \rangle}_{K_{r',s'}} = K_n$ for any $r'\leq r$, and $s'\leq s$. Therefore  
$$\mathbb{P}({\langle G_p \rangle}_{K_{r',s'}} \neq K_n) \leq \mathbb{P}({\langle G_p \rangle}_{K_{r,s}} \neq K_n)$$
Now pick $r'(\leq r)$ and $s'(\leq s)$ such that $r'\leq (s'-2)^2 +s'$. Then $\mathbb{P}({\langle G_p \rangle}_{K_{r',s'}} \neq K_n) \rightarrow 1$ as $n \rightarrow \infty$ if $e p n^{1/\lambda(r',s') } (\log{n}) {r's'}\leq {\lambda(r',s')}^2$ (by Lemma \ref{prop:thu217feb}). Therefore $p_{c}(n,K_{r,s}) \geq  {\lambda(r',s')}^2 (e \log n)^{-1} n^{-1/\lambda(r',s') } $. Using this and taking the supremum over all such $r',s'$ we get the lower bound $$(e \log n)^{-1}   \sup_{r'\leq r, s'\leq s, r'\leq (s'-2)^2 +s'}{\lambda(r',s')}^2  n^{-1/\lambda(r',s') }.$$ Finally since $x^2 n^{{-1/x}}$ is increasing in $x$ for $x>0$, we obtain the following supremum is equal to $$(e \log n)^{-1} {\lambda((s-2)^2 +s, s)}^2  n^{-1/\lambda((s-2)^2 +s, s) },$$ 
completing the proof.
\section*{Acknowledgments} We would like to thank the referee for carefully reading the paper and providing valuable suggestions that improved the presentation and readability of the paper. We would also like to thank Neeladri Maitra for helping us with GeoGebra to prepare Figure 1.

\bibliographystyle{plain}
\bibliography{bootstraprefref.bib}

\end{document}